\title[Totally Real Parameters in Generalized Mandelbrot Set]{Totally real algebraic numbers in  generalized Mandelbrot set}
\author{Kevin G. Hare}
\address{University of Waterloo \\
Department of Pure Mathematics \\
Waterloo, Ontario \\
Canada  N2L 3G1}
\email{kghare@uwaterloo.ca}
\thanks{Research of K.G. Hare was supported, in part, by NSERC Grant 2019-03930}
\author[Chatchai Noytaptim]{Chatchai Noytaptim}
\address{University of Waterloo \\
Department of Pure Mathematics \\
Waterloo, Ontario \\
Canada  N2L 3G1}
\email{cnoytaptim@uwaterloo.ca, chatchai.noytaptim@gmail.com}
\date{\today}
\subjclass[2020]{11R80, 37P05}
\keywords{arithmetic dynamics, Fekete-Szeg\"{o} theorem, generalized Mandelbrot set, totally real algebraic numbers}
\newtheorem{thm}{Theorem}[section]
\newtheorem{prop}{Proposition}[section]
\newtheorem{lemma}{Lemma}[section]
\newtheorem{rem}{Remark}[section]
\begin{document}

\begin{abstract}
 In this article, we study some potential theoretical and topological aspects of the generalized Mandelbrot set introduced by Baker and DeMarco. 
 For $\alpha$ real, we study the set of all totally real algebraic parameters $c$ such that $\alpha$ is preperiodic under the iteration of the one-parameter family $f_c(x) = x^2 + c$.
 We show that when $|\alpha| < 2$ and rational then the set of totally real algebraic parameters $c$ with this property is finite, whereas if $|\alpha| \geq 2$ and rational then this set is countably infinite.  
 As an unexpected consequence of this study, we also show that when $|\alpha| \geq 2$ then parameters $c$ such that $\alpha$ is $f_c$-periodic are necessarily real. 
 As a special case, we classify all totally real algebraic integers $c$ such that $\alpha = \pm1$ is preperiodic.
\end{abstract}

\maketitle

\section{Introduction} 
 In this article, we study  dynamics of the quadratic one-parameter family of the form $f_c(x)=x^2+c\in\overline{\mathbb{Q}}[x]$. Recall that $\alpha\in\overline{\mathbb{Q}}$ is a \textbf{preperiodic point} for $f_c$ if the set $\{\alpha, f_c(\alpha), f^2_c(\alpha),...\}$ of forward iteration of $\alpha$ contains finite distinct values. In other words, $$f^n_c(\alpha)=f_c^m(\alpha)$$ for some integers $0\leq m<n$. Here we write $f_c^n$ to represent
$f_c^n:=f_c\circ f_c \circ...\circ f_c$ the $n$-fold composition of $f_c$ with itself.

In \cite{BD11}, Baker and DeMarco defined the generalized (complex) Mandelbrot set $\mathcal{M}(\alpha)$ as follows
$$\mathcal{M}(\alpha):=\{c\in\mathbb{C}: |f^n_c(\alpha)|\nrightarrow\infty\,\,\text{as}\,\,n\rightarrow\infty\}.$$ That is, it is the collection of complex parameters $c\in\mathbb{C}$ whose forward orbit of $\alpha$ remains bounded under the iteration. This notion generalizes the classical Mandelbrot set $\mathcal{M}(0)$ and plays an important role in the study of unlikely intersection problems in complex dynamics (see e.g., \cite{BD11} and \cite{GHT13}). 

Our first three results, given in Section \ref{sec:genmandel}, gives an insight of $\mathcal{M}(\alpha)$ topologically.  

\begin{thm} \label{thm:M is bounded}
Let $R_{\alpha} = |\alpha|^2+1+\sqrt{|\alpha|^2+1}$.
Then 
\begin{enumerate}
\item If $\alpha \in \mathbb{C}$ then $\mathcal{M}(\alpha)\subseteq \overline{D(0,R_{\alpha})}$. 
\item If $\alpha \in \mathbb{R}$ then 
$$\mathcal{M}(\alpha)\cap\mathbb{R}\subseteq \begin{cases}[-R_{\alpha},1/4],&\mbox{when}\,\,\,|\alpha|\leq 1/2\\ [-R_{\alpha},|\alpha|-\alpha^2],&\mbox{when}\,\,\,|\alpha|>1/2\end{cases}.$$
\end{enumerate}
\end{thm}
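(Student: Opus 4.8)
The plan is to run the standard escape-radius argument for quadratic maps, adapted to the moving initial point $\alpha$. For part (1), fix $c\in\mathbb{C}$ and suppose $|c| > R_\alpha$; I want to show $|f_c^n(\alpha)|\to\infty$. First I would record the elementary estimate that if $|x| \ge |c|$ and $|c| \ge |\alpha|^2+1$, then $|f_c(x)| = |x^2+c| \ge |x|^2 - |c| \ge |x|^2 - |x| = |x|(|x|-1) \ge |x|\cdot\lambda$ for a fixed $\lambda > 1$, provided $|x|$ exceeds $1+\epsilon$; iterating this gives geometric escape. The role of the specific value $R_\alpha$ is to guarantee that the orbit enters this "escaping regime" already at the first or second step: since $|f_c(\alpha)| = |\alpha^2 + c| \ge |c| - |\alpha|^2$, the hypothesis $|c| > R_\alpha = |\alpha|^2+1+\sqrt{|\alpha|^2+1}$ makes $|f_c(\alpha)|$ large enough (one checks $|f_c(\alpha)| > |c|$ is not quite what one wants; rather $|f_c(\alpha)|$ exceeds the threshold so that the next inequality $|f_c^2(\alpha)| \ge |f_c(\alpha)|^2 - |c|$ keeps growing). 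The quadratic in $|\alpha|$ defining $R_\alpha$ is exactly the boundary case of the inequality $(|c|-|\alpha|^2)^2 - |c| > |c| - |\alpha|^2$, i.e.\ the condition that the orbit is already strictly increasing after one step; solving this quadratic in $|c|$ yields $R_\alpha$. So the key steps are: (a) derive the one-step growth lemma; (b) show $|c|>R_\alpha$ forces $|f_c(\alpha)| > |c|$ and $|f_c(\alpha)|$ past the growth threshold; (c) induct to get $|f_c^n(\alpha)|\to\infty$, hence $c\notin\mathcal{M}(\alpha)$.

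For part (2), restrict to $\alpha, c \in \mathbb{R}$, so the whole orbit is real and I can use genuine inequalities rather than absolute values. The lower bound $-R_\alpha$ is immediate from part (1). For the upper bound I would analyze when $f_c$ has a real fixed point and whether the orbit of $\alpha$ can stay bounded. The real map $f_c(x)=x^2+c$ has real fixed points iff $c \le 1/4$, with fixed points $p_\pm = \tfrac{1}{2}(1\pm\sqrt{1-4c})$; for $c > 1/4$ every real orbit escapes to $+\infty$, which already gives the bound $1/4$ in the case $|\alpha| \le 1/2$. To get the sharper bound $|\alpha| - \alpha^2$ when $|\alpha| > 1/2$: the orbit of $\alpha$ stays bounded (for $c \le 1/4$) precisely when $\alpha$ lies in the "filled" interval $[-p_+, p_+]$, i.e.\ when $|\alpha| \le p_+ = \tfrac12(1+\sqrt{1-4c})$. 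I would solve this inequality for $c$: $|\alpha| \le \tfrac12(1+\sqrt{1-4c}) \iff 2|\alpha|-1 \le \sqrt{1-4c} \iff (2|\alpha|-1)^2 \le 1-4c$ (valid since $|\alpha|>1/2$ makes the left side nonnegative) $\iff 4\alpha^2 - 4|\alpha| + 1 \le 1 - 4c \iff c \le |\alpha| - \alpha^2$. Combining with $c \le 1/4$ (automatic here since $|\alpha|-\alpha^2 \le 1/4$) gives the claim.

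The main obstacle I anticipate is pinning down the exact threshold constant in part (1) — making sure the chosen $R_\alpha$ is simultaneously (i) large enough that the inductive growth argument starts cleanly at step one (not just asymptotically) and (ii) matches the clean closed form $|\alpha|^2+1+\sqrt{|\alpha|^2+1}$. This is really a bookkeeping exercise in choosing the right auxiliary quantity: I expect the cleanest route is to prove that if $|c| > R_\alpha$ then $|f_c(\alpha)| > |c|$ AND $|f_c(\alpha)| > |\alpha|^2 + 1$, and then show by induction that $|f_c^{n+1}(\alpha)| > |f_c^n(\alpha)|$ with the gap growing, using $|f_c^{n+1}(\alpha)| \ge |f_c^n(\alpha)|^2 - |c| > |f_c^n(\alpha)|^2 - |f_c^n(\alpha)|$. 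A secondary but minor point is the boundary behavior in part (2) at $|\alpha| = 1/2$, where the two cases must agree: there $|\alpha|-\alpha^2 = 1/2 - 1/4 = 1/4$, so they match and the split is consistent. Everything else is routine real-variable analysis of the quadratic family.
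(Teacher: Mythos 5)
Your overall strategy for both parts matches the paper's, and in part (1) you correctly pinpoint the origin of $R_\alpha$: it is exactly where the one-step lower bound $m_1 := |c|-|\alpha|^2$ starts to satisfy $m_1^2-|c|>m_1$, i.e.\ where $(|c|-|\alpha|^2)^2-|c|>|c|-|\alpha|^2$. However, the concrete induction you propose to run off this observation has a real gap. You want to prove $|c|>R_\alpha \Rightarrow |f_c(\alpha)|>|c|$ and then iterate $|f_c^{n+1}(\alpha)| \geq |f_c^n(\alpha)|^2-|c| > |f_c^n(\alpha)|^2-|f_c^n(\alpha)|$. That first implication is false. Take $\alpha=1$ (so $R_1 = 2+\sqrt{2}\approx 3.41$) and $c=-3.5$: then $|c|>R_1$ but $|f_c(1)|=|1-3.5|=2.5<3.5=|c|$, so your step fails already at $n=1$ (with $\alpha=0$ one even has $|f_c(0)|=|c|$ identically). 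The fix is to run the induction on the lower bounds $m_n$ themselves, defined by $m_1=|c|-|\alpha|^2$ and $m_{n+1}=m_n^2-|c|$, and never compare $|f_c^n(\alpha)|$ to $|c|$: your threshold computation shows $|c|>R_\alpha$ gives $m_2>m_1>0$, and then $m_{n+1}-m_n = m_n^2-m_{n-1}^2>0$ by induction, with the gap growing since $m_n+m_{n-1}>2$. The paper's version of this is to write $|c|=R_\alpha+\epsilon$ and prove directly, by the reverse triangle inequality and induction, that $|f_c^n(\alpha)|\geq 1+\sqrt{|\alpha|^2+1}+2(n-1)\epsilon$ for $n\geq 2$; this never invokes $|f_c^n(\alpha)|>|c|$.

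For part (2) you are essentially doing what the paper does. A minor difference: for the subcase $|\alpha|\leq 1/2$ the paper compares $f_c^n(|\alpha|)\geq f_c^n(0)$ and cites $\mathcal{M}(0)\cap\mathbb{R}=[-2,1/4]$, whereas your observation that $c>1/4$ forces $f_c(x)>x$ for all real $x$ (no real fixed point) is cleaner and self-contained. For $|\alpha|>1/2$, your phrasing "stays bounded precisely when $|\alpha|\leq p_+$" overstates the case when $c<-2$ (where the filled Julia set is a Cantor set strictly inside $[-p_+,p_+]$), but you only use the correct direction $|\alpha|>p_+\Rightarrow$ escape, which is the paper's argument (via $f_c(|\alpha|)>|\alpha|$, monotonicity, and the fixed points being below $|\alpha|$); your algebra solving $|\alpha|\leq p_+$ for $c$ correctly recovers $c\leq |\alpha|-\alpha^2$.
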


\begin{rem}
    The equality occurs only when $\alpha=0$.  More precisely, the real part $\mathcal{M}(0)\cap\mathbb{R}=[-2,1/4]$ (cf. \cite[Ch.VIII, Theorem 1.2]{CG93}).
\end{rem}

\begin{thm} \label{thm:M is real}
Let $\alpha \in \mathbb{R}$ with $\alpha \in (-\infty, -2] \cup [2, \infty)$.
Then $\mathcal{M}(\alpha)$ is totally disconnected and is contained in the real line.
\end{thm}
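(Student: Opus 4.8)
The plan is to establish (A) $\mathcal{M}(\alpha)\subseteq\mathbb{R}$, and then (B) that a compact subset of $\mathbb{R}$ with the dynamical description of $\mathcal{M}(\alpha)$ contains no nondegenerate interval, hence is totally disconnected. Throughout put $z_n=f_c^n(\alpha)$, $x_n=\operatorname{Re}z_n$, $y_n=\operatorname{Im}z_n$, and recall from Theorem~\ref{thm:M is bounded} and its proof that for $c\in\mathcal{M}(\alpha)$ the orbit of $\alpha$ stays in $\overline{D(0,r_c)}$ with $r_c=\tfrac12\bigl(1+\sqrt{1+4|c|}\bigr)$, and that $|c|\le R_\alpha$, equivalently $r_c\le 1+\sqrt{\alpha^2+1}$. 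Since $\alpha^2\ge 4>3$ one has $1+\sqrt{\alpha^2+1}<\alpha^2$, so $r_c<\alpha^2$; as $x_1=\alpha^2+\operatorname{Re}c$ with $|x_1|\le r_c$, this already forces $\operatorname{Re}c<0$. One also gets $|c|\ge|\alpha|^2-|\alpha|\ge 2$, so either $c=-2$ or $c\notin\mathcal{M}(0)$; in the latter case $f_c$ is hyperbolic, and when $c$ is moreover real (necessarily $c<-2$, by Theorem~\ref{thm:M is bounded}(2)) the filled Julia set $K_c$ is a Cantor subset of $\mathbb{R}$.

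For (A), suppose $c\in\mathcal{M}(\alpha)$ with $b:=\operatorname{Im}c\neq 0$; replacing $c$ by $\bar c$ if necessary, take $b>0$. From $|z_n|\le r_c$ we have $x_n^2+y_n^2\le r_c^2$, while $x_n^2-y_n^2=x_{n+1}-\operatorname{Re}c=x_{n+1}+(\alpha^2-x_1)\ge -r_c+(\alpha^2-r_c)=\alpha^2-2r_c$. Hence $x_n^2\ge y_n^2+(\alpha^2-2r_c)$ for every $n\ge 0$, so when $\alpha^2-2r_c>1$ there is a uniform bound $|x_n|\ge\rho_{*}:=\sqrt{\alpha^2-2r_c}>1$. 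Feeding this into $y_{n+1}=2x_ny_n+b$ with $y_1=b$: one checks $|y_n|\ge B:=b/(2\rho_{*}-1)$ for all $n\ge1$, that $|y_{n+1}|-B\ge 2\rho_{*}\bigl(|y_n|-B\bigr)$ with $2\rho_{*}>1$, and that $|y_1|-B=b\bigl(1-\tfrac1{2\rho_{*}-1}\bigr)>0$; therefore $|y_n|\to\infty$, contradicting $|z_n|\le r_c$. Since $r_c\le 1+\sqrt{\alpha^2+1}$, the inequality $\alpha^2-2r_c>1$ holds whenever $\alpha^2>5+2\sqrt5$, i.e.\ $|\alpha|>\sqrt{5+2\sqrt5}\approx 3.08$, so (A) is proved for all such $\alpha$.

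The main obstacle is the remaining compact range $2\le|\alpha|\le\sqrt{5+2\sqrt5}$, where a priori $r_c$ may be as large as $1+\sqrt{\alpha^2+1}>\alpha^2/2$ and the crude estimate degenerates. Here one must use finer information: the constraint $|f_c(\alpha)|\le r_c$ together with $|c|=r_c^2-r_c$ forces $r_c$ well below its a priori maximum unless $b$ is small, and in the residual case ($b$ small, $x_1=\operatorname{Re}z_1>0$) one analyses $z_2=z_1^2+c$ and $z_3$ directly, combining $|z_2|\le r_c$, $|z_3|\le r_c$, $\operatorname{Im}z_1=b$, and the near-equalities forced above to again drive $|y_n|$ upward. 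Making this bookkeeping clean and uniform over the compact $\alpha$-range is the technical heart of (A), and the step I expect to need the most care.

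Granting (A), part (B) is short. By (A) and Theorem~\ref{thm:M is bounded}(2), $\mathcal{M}(\alpha)\subseteq[-R_\alpha,\,|\alpha|-\alpha^2]\subseteq(-\infty,-2]$, and $\mathcal{M}(\alpha)=\bigcap_n\{c:|f_c^n(\alpha)|\le R_\alpha\}$ is compact. If it were not totally disconnected it would contain a nondegenerate interval $[c_1,c_2]$; choose $c_0\in(c_1,c_2)$, so $c_0<-2$ and $f_{c_0}$ is hyperbolic with $K_{c_0}$ a Cantor set. By structural stability of hyperbolic polynomials (see e.g.\ \cite{CG93}) there are a neighborhood $U\subseteq\mathbb{C}$ of $c_0$ and a jointly continuous holomorphic motion $h\colon U\times K_{c_0}\to\mathbb{C}$ with $h_{c_0}=\mathrm{id}$ and each $h_c$ an injection onto $K_c$; shrink $U$ so that $U\cap\mathbb{R}\subseteq(c_1,c_2)$. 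For real $c\in U$ we have $\alpha\in K_c$, so $z(c):=h_c^{-1}(\alpha)\in K_{c_0}$ is defined and (by compactness of $K_{c_0}$ and injectivity of $h_c$) depends continuously on $c$; a continuous image of the interval $U\cap\mathbb{R}$ inside the totally disconnected set $K_{c_0}$ is a single point $z_0$. Thus $\alpha=h_c(z_0)$ for all real $c\in U$, and since $c\mapsto h_c(z_0)$ is holomorphic on $U$ the identity theorem gives $\alpha=h_c(z_0)\in K_c$ for every $c\in U$, i.e.\ $U\subseteq\mathcal{M}(\alpha)\subseteq\mathbb{R}$—impossible for $U$ open in $\mathbb{C}$. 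Hence $\mathcal{M}(\alpha)$ is totally disconnected, completing the proof.
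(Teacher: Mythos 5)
Your plan departs substantially from the paper's. The paper proves both assertions simultaneously by an elementary Cantor-set construction: it defines nested real sets $C_n = \{c : -u(c) \le f_c^n(\alpha) \le u(c)\}$ with $u(c) = \tfrac12(1+\sqrt{1-4c})$ the repelling fixed point, shows via the intermediate value theorem that each of the $2^{n-1}$ intervals of $C_n$ contains a root of $f_c^n(\alpha)-f_c^m(\alpha)$, and then, since that polynomial has degree exactly $2^{n-1}$ in $c$, concludes that \emph{all} its roots are real and in $C_n$. Hence $\mathrm{Prep}(\alpha) \subset C := \cap C_n \subset \mathbb{R}$, and since $\partial\mathcal{M}(\alpha) \subset \overline{\mathrm{Prep}(\alpha)}$ and $\mathcal{M}(\alpha)$ is full, $\mathcal{M}(\alpha)$ has no interior and $\mathcal{M}(\alpha) \subset C$; both realness and total disconnectedness drop out at once.

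Your step (A) is the genuine gap. The estimate $x_n^2 \ge y_n^2 + (\alpha^2-2r_c)$ combined with the recursion $y_{n+1}=2x_ny_n+b$ does force $|y_n|\to\infty$ when $\alpha^2 - 2r_c > 1$, and your computation that this reduces to $|\alpha| > \sqrt{5+2\sqrt5}\approx 3.08$ is correct. But the theorem claims the conclusion for all $|\alpha|\ge 2$, and the range $2\le|\alpha|\le\sqrt{5+2\sqrt5}$ is exactly where the crude escape-radius bound degenerates; you acknowledge this and offer only a sketch (``analyse $z_2, z_3$ directly, combine near-equalities''), which is not a proof. This is not a small bookkeeping issue: the borderline case $|\alpha|=2$, $c=-2$ has $r_c = 2 = |\alpha|$ and $\alpha^2-2r_c=0$, so your $\rho_*$ collapses entirely. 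The paper's counting argument avoids all metric estimates by leveraging the algebraic fact that the degree of $f_c^n(\alpha)-f_c^m(\alpha)$ in $c$ equals the number of intervals in $C_n$, which works uniformly for all $|\alpha|\ge 2$.

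Your step (B) is correct granting (A), and it is a nice alternative argument. Invoking hyperbolicity of $f_{c_0}$ for $c_0<-2$, the holomorphic motion $h_c$ of the Cantor Julia set, continuity of $c\mapsto h_c^{-1}(\alpha)$, and the identity theorem to conclude that $U\subset\mathcal{M}(\alpha)\subset\mathbb{R}$ is a valid contradiction. It does, however, import structural stability and the $\lambda$-lemma, which is considerably heavier machinery than the paper needs; the paper gets total disconnectedness for free because $C$ is manifestly a Cantor set. You should also note that the paper's route shows something slightly stronger along the way (all roots of $f_c^n(\alpha)-f_c^m(\alpha)$ are real), which the article explicitly flags in the abstract as ``an unexpected consequence.'' To salvage your approach you would need either to sharpen the estimate in (A) to cover $2\le|\alpha|\le\sqrt{5+2\sqrt5}$, or to replace (A) entirely by the paper's root-counting argument.
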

\begin{thm} \label{thm:M no isolated points}
Let $\alpha \in \mathbb{C}$.  Then $\mathcal{M}(\alpha)$ does not have any isolated points.
\end{thm}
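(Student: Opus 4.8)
The plan is to use potential theory: the generalized Mandelbrot set $\mathcal{M}(\alpha)$ is the complement of the basin of attraction of $\infty$ for the parametrized dynamics, and it should coincide with the support of an equilibrium (harmonic) measure, hence carry no isolated points. Concretely, one defines the escape-rate function on parameter space
$$
G(c) \;=\; \lim_{n\to\infty} \frac{1}{2^n}\,\log^+\!\bigl|f_c^{\,n}(\alpha)\bigr|,
$$
and checks, exactly as for the classical Mandelbrot set, that $G$ is continuous on $\mathbb{C}$, subharmonic, strictly positive and harmonic off $\mathcal{M}(\alpha)$, and vanishes precisely on $\mathcal{M}(\alpha)$. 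The point at $c=\infty$ is then an isolated point at which $G$ is not harmonic (a logarithmic pole, since $f_c^{\,n}(\alpha)$ has degree $2^{n-1}$ in $c$), so $\mathcal{M}(\alpha)$ has positive logarithmic capacity and $G$ is its Green's function.

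The key steps, in order. First, establish the uniform limit: using Theorem 1.1, the orbit $\{f_c^{\,n}(\alpha)\}$ escapes to $\infty$ exactly when $c\notin\mathcal{M}(\alpha)$, and a standard telescoping/comparison argument (the same one that shows $G_{\mathrm{Mandel}}$ exists) gives local uniform convergence of $\frac{1}{2^n}\log^+|f_c^{\,n}(\alpha)|$ on $\mathbb{C}\setminus\mathcal{M}(\alpha)$, hence continuity and subharmonicity of $G$ on all of $\mathbb{C}$ with $\{G=0\}=\mathcal{M}(\alpha)$. Second, the maximum principle for harmonic functions: suppose $c_0$ were an isolated point of $\mathcal{M}(\alpha)$. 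Then on a small punctured disk $D^\ast(c_0,r)$ we have $G>0$, $G$ harmonic, and $G\to 0$ as $c\to c_0$; since $G$ is bounded near $c_0$, the isolated singularity at $c_0$ is removable, so $G$ extends harmonically across $c_0$ with $G(c_0)=0$. But then $G$ is a nonnegative harmonic function on $D(c_0,r)$ attaining its minimum $0$ at the interior point $c_0$, forcing $G\equiv 0$ on that disk by the minimum principle — contradicting $c_0$ being isolated in $\mathcal{M}(\alpha)$. Third, one must separately rule out that $\mathcal{M}(\alpha)$ is a single point $\{c_0\}$; but $\mathcal{M}(\alpha)$ always contains infinitely many points — e.g. every $c$ for which $\alpha$ is preperiodic lies in it, and such $c$ exist in abundance (already the relations $f_c(\alpha)=\alpha$, $f_c^2(\alpha)=\alpha$, etc., produce infinitely many parameters), or alternatively $\mathcal{M}(\alpha)$ is closed and unbounded-complement with positive capacity.

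The main obstacle I expect is the first step in the degenerate regime: one needs $G$ to be genuinely continuous \emph{up to and on} $\mathcal{M}(\alpha)$, i.e. $G(c)\to 0$ as $c\to c_0\in\mathcal{M}(\alpha)$, and to be sure the convergence defining $G$ is locally uniform near the boundary so that subharmonicity passes to the limit. When $|\alpha|\geq 2$, Theorem 1.3 tells us $\mathcal{M}(\alpha)$ is totally disconnected and real, so isolated-point-freeness is precisely the assertion that it is a Cantor-type set; here the potential-theoretic argument still applies, but one should double-check that $\mathcal{M}(\alpha)$ is nonempty and perfect by exhibiting, say, a nested sequence of parameter intervals whose endpoints lie in $\mathcal{M}(\alpha)$ (pull-back of the real escape locus), mirroring the classical dichotomy for $x^2+c$. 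An alternative, purely dynamical route avoiding potential theory is to show directly that parameters with $\alpha$ preperiodic are dense in $\partial\mathcal{M}(\alpha)$ and that $\mathcal{M}(\alpha)$ is perfect via a holomorphic-motion/Montel normal-families argument on the family $c\mapsto f_c^{\,n}(\alpha)$; but the equilibrium-measure argument above is the cleanest and is the one I would write up.
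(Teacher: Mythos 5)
Your proposal is correct, but it proves the theorem by a genuinely different route from the paper. The paper's argument is purely complex-analytic and avoids potential theory: assuming an isolated point $c_0$, it picks a small circle $C = \partial D(c_0,\epsilon)$ avoiding $\mathcal{M}(\alpha)$, uses compactness of $C$ to obtain a single iterate $N$ with $|f_c^N(\alpha)| > R_\alpha$ for all $c \in C$, and then applies the open mapping theorem to the polynomial $\phi(c) = f_c^N(\alpha)$: since $|\phi(c_0)| \leq R_\alpha$ but $|\phi| > R_\alpha$ on $C$, the image $\phi(D(c_0,\epsilon))$ must cover the disc $D(0,R_\alpha)$, and pulling back shows $D(c_0,\epsilon) \cap \mathcal{M}(\alpha)$ is larger than a single point. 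Your argument instead introduces the escape-rate function $G(c) = \lim 2^{-n}\log^+|f_c^n(\alpha)|$, establishes its continuity and subharmonicity, and kills an isolated point $c_0$ via removable singularities and the minimum principle for harmonic functions. Both are valid; yours buys a conceptually cleaner proof that runs exactly parallel to the classical Mandelbrot-set argument and dovetails with the capacity computation already present in Proposition~\ref{prop: capmandel}, but it requires you to actually verify the locally uniform convergence of $2^{-n}\log^+|f_c^n(\alpha)|$ on all of $\mathbb{C}$ (not merely off $\mathcal{M}(\alpha)$) — you flag this as the main obstacle and correctly identify it as the same telescoping estimate used for the classical case, so there is no gap, only a detail to be written out. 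The paper's approach buys a more elementary, self-contained argument that uses only the open mapping theorem and the uniform bound $R_\alpha$ from Theorem~\ref{thm:M is bounded}. One small caution on a side remark in your plan: you assert $G$ ``is'' the Green's function of $\mathcal{M}(\alpha)$; a priori the potential-theoretic Green's function need not vanish at irregular boundary points (isolated points being the canonical example), so the argument should be run with the dynamically-defined $G$ and its continuity directly, as you in fact do, rather than by appealing to generic properties of equilibrium potentials.
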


It is well-understood by complex dynamicists that there are infinitely many complex parameters $c\in\mathbb{C}$ for which $\alpha\in\mathbb{C}$ is $f_c$-preperiodic (cf. \cite[Lemma 3.5]{BD11}). In particular, fix $\alpha\in\overline{\mathbb{Q}}$, the collection of all parameters $c\in\overline{\mathbb{Q}}$ such that $\alpha$ is a preperiodic point of $f_c$ forms a set of bounded absolute (Weil) height.  In fact, any complex parameter $c\in\mathbb{C}$ satisfying the polynomial  \begin{equation}
    F_{m,n}(c)=f^n_c(\alpha)-f^m_c(\alpha)\in \overline{\mathbb{Q}}[c]\label{eq:prepparameter}\end{equation} for some integers $0\leq m<n$, must indeed be an algebraic number. On the other hand, all roots ($c$ and all of its conjugates) of  $F_{m,n}(c)$ are contained in the  generalized Mandelbrot set $\mathcal{M}(\alpha)$. As each parameter $c$ in $\mathcal{M}(\alpha)$ has  absolute value at most $R_{\alpha}$, we have that the absolute height of algebraic parameter $c$ satisfying equation (\ref{eq:prepparameter}) is bounded.  
%By Northcott's theorem, it immediately yields that there are only finitely many algebraic parameters $c$ such that $\alpha\in\overline{\mathbb{Q}}$ is $f_c$-preperiodic over any given number field.
By Northcott's theorem, it immediately yields for any fixed number field $K$ that there are only 
    finitely many algebraic parameters $c \in K$ such that $\alpha\in\overline{\mathbb{Q}}$ is $f_c$-preperiodic.
%{\color{red} I'm not sure I understand this, as it seems to be at odds with Theorem \ref{thm:alpha >= 2}.  }
    
Recall that an algebraic number $\theta\in\overline{\mathbb{Q}}$ is said to be \textbf{totally real} if its minimal polynomial has all distinct real roots. 
Denote by $\mathbb{Q}^{\mathrm{tr}}$ the collection of all totally real algebraic numbers. 
Let $\alpha\in\mathbb{C}$. Denote by
$$\mathrm{Prep}(\alpha)=\{c \in\mathbb{C} :  \alpha\,\,\text{is preperiodic for} \,\,f_c(x)=x^2+c\}$$ the collection of parameters $c$ satisfying $f^n_c(\alpha)=f^m_c(\alpha)$ for some integers $0\leq m<n$. 
One goal of this article is to consider the size of $\mathrm{Prep}(\alpha)\cap \mathbb{Q}^{\mathrm{tr}}$ for rational $\alpha$.
We prove

\begin{thm} \label{thm:alpha}
Let $\alpha \in \mathbb{Q}$.
\begin{enumerate}
\item If $|\alpha| < 2 $ then there are finitely many totally real algebraic parameters $c$ such that $\alpha$ is $f_c$-preperiodic.
    Further more this set is always non-empty.
    \label{case:alpha<2}
    \item If $|\alpha| \geq 2$ then there are infinitely many totally real algebraic parameters $c$ such that $\alpha$ is $f_c$-preperiodic.
    \label{case:alpha>=2}
\end{enumerate}
\end{thm}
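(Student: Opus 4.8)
My plan is to treat the two cases by quite different mechanisms: Case \eqref{case:alpha<2} via a height/Northcott-style finiteness argument together with an explicit construction for non-emptiness, and Case \eqref{case:alpha>=2} by producing an explicit infinite family of totally real parameters using the fact (Theorem \ref{thm:M is real}) that $\mathcal{M}(\alpha)$ lies on the real line when $|\alpha|\geq 2$.

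For Case \eqref{case:alpha<2}, the key point is that the polynomials $F_{m,n}(c)=f_c^n(\alpha)-f_c^m(\alpha)$ have rational (indeed, when $\alpha\in\mathbb{Z}$, integer) coefficients, and every root $c$ of $F_{m,n}$ together with all its Galois conjugates lies in $\mathcal{M}(\alpha)$. When $|\alpha|<2$ and $\alpha$ is rational, one checks that $R_\alpha<2$ in the relevant range, so by Theorem \ref{thm:M is bounded} every conjugate of such a $c$ lies in $\overline{D(0,R_\alpha)}$ with $R_\alpha<2$; hence a totally real such $c$ has all conjugates in a \emph{real} interval of length less than $4$ (in fact contained in an interval strictly shorter than $[-2,2]$). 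The Fekete--Szeg\H{o} theorem (equivalently, the classical result that $[-2,2]$ is the unique real interval of length $4$ containing infinitely many conjugate sets of algebraic integers, and that shorter intervals contain only finitely many) then forces the set of such totally real algebraic integers to be finite; the rational (non-integer) case is handled by clearing denominators and tracking the bounded denominator. For non-emptiness, I would exhibit one explicit totally real $c$: e.g. the parameter $c$ for which $\alpha$ is fixed, $c=\alpha-\alpha^2$, is rational hence trivially totally real, so $\mathrm{Prep}(\alpha)\cap\mathbb{Q}^{\mathrm{tr}}\neq\emptyset$ always.

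For Case \eqref{case:alpha>=2}, since $|\alpha|\geq 2$, Theorem \ref{thm:M is real} tells us $\mathcal{M}(\alpha)\subseteq\mathbb{R}$, so \emph{every} algebraic parameter $c$ with $\alpha$ preperiodic automatically has all its conjugates real — the total reality condition becomes automatic once we know $c$ is real and algebraic. Thus it suffices to produce infinitely many distinct real algebraic $c$ making $\alpha$ preperiodic. I would do this by solving $f_c^n(\alpha)=\alpha$, i.e. forcing $\alpha$ to be periodic of period dividing $n$: the polynomial $f_c^n(\alpha)-\alpha$ in $c$ has degree $2^{n-1}$ in $c$ with positive leading coefficient, and by an intermediate-value / monotonicity estimate on the real interval guaranteed by Theorem \ref{thm:M is bounded}(2) one shows it has a real root $c_n$ for each $n$; varying $n$ (and checking the roots are not all equal, e.g. via a degree or orbit-length argument) yields infinitely many. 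One then verifies these $c_n$ are genuinely distinct by noting the exact period of $\alpha$ grows, or by a counting argument on how many parameters can share a given small period.

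The main obstacle I anticipate is the non-emptiness and, more importantly, the \emph{distinctness/infinitude} bookkeeping in Case \eqref{case:alpha>=2}: one must ensure the constructed real roots $c_n$ are not spurious repetitions coming from $\alpha$ already being periodic with a smaller period for some earlier parameter, and one must confirm that for $|\alpha|\geq 2$ there really is a real $c$ (not just a complex one) realizing each prescribed periodic behaviour — this is where Theorem \ref{thm:M is real} and the real interval bound of Theorem \ref{thm:M is bounded}(2) do the essential work, by confining the analysis to $\mathbb{R}$ where sign-change arguments apply. In Case \eqref{case:alpha<2} the subtlety is purely the arithmetic of denominators when $\alpha\in\mathbb{Q}\setminus\mathbb{Z}$, ensuring the Fekete--Szeg\H{o}-type finiteness still applies after rescaling.
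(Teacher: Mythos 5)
Your Case~\eqref{case:alpha>=2} argument is essentially correct and agrees with the paper's, though it is over-engineered: once Theorem~\ref{thm:M is real} gives $\mathcal{M}(\alpha)\subseteq\mathbb{R}$, every $c\in\mathrm{Prep}(\alpha)$ and all its conjugates lie in $\mathbb{R}$ (conjugates of a root of $F_{m,n}$ are again roots, hence again in $\mathrm{Prep}(\alpha)\subseteq\mathcal{M}(\alpha)$), so one can simply cite the fact that $\mathrm{Prep}(\alpha)$ is infinite (Baker--DeMarco, \cite[Lemma~3.5]{BD11}) rather than build periodic parameters by hand and verify distinctness. Your non-emptiness observation in Case~\eqref{case:alpha<2} ($c=\alpha-\alpha^2\in\mathbb{Q}$ fixes $\alpha$) is correct and actually cleaner than the paper's choice of $\theta_\alpha=-\alpha^2-1-\sqrt{\alpha^2+1}$.

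However, your finiteness argument in Case~\eqref{case:alpha<2} has a genuine gap. The assertion ``one checks that $R_\alpha<2$'' is false: $R_\alpha=|\alpha|^2+1+\sqrt{|\alpha|^2+1}\geq 2$ for all $\alpha$, with equality only at $\alpha=0$. More to the point, the real interval containing $\mathcal{M}(\alpha)\cap\mathbb{R}$ from Theorem~\ref{thm:M is bounded}(2) need not have length $<4$. For example, taking $\alpha=3/2$, the interval is $[-R_{3/2},\,|\alpha|-\alpha^2]=[-5.05\ldots,\,-0.75]$, of length about $4.3>4$, and the length grows further as $|\alpha|\to 2^-$. So the elementary ``interval of length $<4$'' version of Fekete--Szeg\H{o} (or the Kronecker/Robinson approach) does not apply directly for all rational $|\alpha|<2$. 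The paper circumvents this by working not with the bounding interval but with the compact set $\mathcal{M}(\alpha)\cap\mathbb{R}$ itself: one forms the adelic set $\mathbb{E}$ with $E_\infty=\mathcal{M}(\alpha)\cap\mathbb{R}$ and $E_v=\mathcal{M}_v(\alpha)$ at the finite places (using Lemma~\ref{lem:prepinmandel} to know conjugates land in these sets), and shows $\gamma_\infty(\mathbb{E})<1$ via Proposition~\ref{prop: capmandel} ($\gamma_\infty(\mathcal{M}_v(\alpha))=1$ at all $v$) together with the strict capacity drop $\gamma_\infty(\mathcal{M}(\alpha)\cap\mathbb{R})<\gamma_\infty(\mathcal{M}(\alpha))=1$, which holds because $\mathcal{M}(\alpha)$ is genuinely two-dimensional when $|\alpha|<2$. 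The adelic Fekete--Szeg\H{o} theorem (Theorem~\ref{thm:FeketeSzego}) then gives finiteness for all algebraic numbers, not just integers, which also resolves the denominator issue you flagged but left unaddressed. You should replace your interval-length estimate with this capacity comparison for the compact set $\mathcal{M}(\alpha)\cap\mathbb{R}$.
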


It is worth pointing out that the finiteness result in Theorem \ref{thm:alpha} part \eqref{case:alpha<2} does not follow directly from the Northcott's theorem as explained above. Indeed, each totally real algebraic parameters $c$ belongs to the maximal totally real $\mathbb{Q}^{\mathrm{tr}}$ extension of $\mathbb{Q}$  which is an infinite field.

The major ingredient and motivation in the proof of Theorems \ref{thm:alpha} above and \ref{thm:alpha = 1} below is an adelic Fekete-Szeg\"{o}'s theorem (Theorem \ref{thm:FeketeSzego}). In order to apply Theorem \ref{thm:FeketeSzego}, we need to understand the geometric shape of the complex generalized Mandelbrot set (see Theorem \ref{thm:M is bounded}) introduced by Baker-DeMarco. Indeed, all totally real algebraic parameters $c$ (with all of its conjugates) for which $\alpha$ is $f_c$-preperiodic are contained in the real part of the generalized Mandelbrot set (Lemma \ref{lem:prepinmandel}). Roughly speaking, for  rational numbers $\alpha\in (-2,2)$, such real part is small in the sense of arithmetic capacity theory and the Fekete-Szeg\"{o}'s theorem yields the desired conclusion in part (1). On the other hand, the conclusion of Theorem \ref{thm:alpha} part \eqref{case:alpha>=2} is a consequence of the fact that the generalized Mandelbrot set is contained in real line when $\alpha\in\mathbb{Q}\backslash (-2,2)$ (Theorem \ref{thm:M is real}).

As a special case we also show

\begin{thm} \label{thm:alpha = 1}
    Let $\alpha = \pm 1$.  Then $-2-\sqrt{2},-3,-2,-1,-2+\sqrt{2}$ are the only totally real algebraic parameters $c$ such that $\alpha$ is $f_c$-preperiodic.
\end{thm}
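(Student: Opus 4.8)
The plan is to leverage the structural results already available. Since $\alpha = \pm 1$ gives $\alpha^2 = 1$, the two cases produce the same polynomials $F_{m,n}(c)$, so it suffices to treat $\alpha = 1$. By Lemma~\ref{lem:prepinmandel}, every totally real algebraic parameter $c$ for which $\alpha$ is $f_c$-preperiodic has all its conjugates in $\mathcal{M}(1) \cap \mathbb{R}$; since $|\alpha| = 1 > 1/2$, Theorem~\ref{thm:M is bounded}(2) pins this down to the interval $[-R_1, 1 - 1] = [-R_1, 0]$ where $R_1 = 2 + \sqrt{2}$. So all such $c$ and their conjugates lie in $[-2-\sqrt{2}, 0]$.

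Next I would invoke the adelic Fekete--Szeg\H{o} theorem (Theorem~\ref{thm:FeketeSzego}) together with a capacity computation for the interval $[-2-\sqrt{2}, 0]$. An interval of length $L$ has logarithmic capacity $L/4$, so here the capacity is $(2+\sqrt{2})/4 < 1$. This forces the set of totally real algebraic $c$ with all conjugates in this interval and satisfying an integrality-type condition to be finite; more carefully, the Fekete--Szeg\H{o} dichotomy says that since the relevant adelic set has capacity strictly less than $1$ (the archimedean contribution being the interval, and the non-archimedean contributions coming from the fact that the leading coefficients of the $F_{m,n}$ are controlled), only finitely many algebraic numbers can have all conjugates inside it. This reduces the problem to a finite check. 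I expect the main obstacle to be making the capacity/Fekete--Szeg\H{o} bookkeeping precise: one must verify that the product of local capacities (or the relevant adelic capacity) is genuinely less than $1$ and identify exactly which algebraic integers (or algebraic numbers with bounded denominator) survive.

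Once finiteness is in hand, I would carry out the explicit enumeration. The candidates $c \in \{-2-\sqrt{2}, -3, -2, -1, -2+\sqrt{2}\}$ must be verified directly: for each, check that $1$ is genuinely preperiodic under $f_c(x) = x^2 + c$ and that $c$ is totally real (the rational ones trivially, and for $-2\pm\sqrt{2}$ note their conjugate pair $\{-2-\sqrt{2}, -2+\sqrt{2}\}$ both lie in $[-2-\sqrt{2},0]$). For instance $c = -2$: $f_{-2}(1) = -1$, $f_{-2}(-1) = -1$, so $1$ is preperiodic; $c=-1$: $f_{-1}(1) = 0$, $f_{-1}(0) = -1$, $f_{-1}(-1) = 0$, periodic orbit $\{0,-1\}$; $c = -2+\sqrt{2}$: $f_c(1) = -1+\sqrt 2$, and one checks the orbit closes up; similarly for $-3$ and $-2-\sqrt 2$. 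The harder direction is completeness: I would argue that any other totally real $c$ in $[-2-\sqrt{2}, 0]$ with a preperiodic $1$ would have to appear among roots of the low-degree $F_{m,n}$, and that the capacity bound plus the constraint that conjugates stay in the interval leaves no room for additional solutions. Concretely one analyzes $F_{m,n}(c)$ for small $(m,n)$ — noting that $c=0$ itself is excluded since $f_0^n(1) = 1$ is fixed but $0 \notin \mathcal M(1)$? (in fact $f_0(1)=1$ is fixed, so $0$ \emph{is} in $\mathrm{Prep}(1)$ but is it totally real and does it satisfy the claim? — here $0$ is preperiodic, so the enumeration must be re-examined at the endpoint). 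Resolving such boundary subtleties, and confirming no sporadic higher-degree totally real solution hides in the interval, is where the real work lies; the Fekete--Szeg\H{o} input is what guarantees the search terminates.
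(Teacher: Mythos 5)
Your plan diverges from the paper's proof at exactly the point where the real work happens, and the step you flag as ``where the real work lies'' is in fact left undone, so there is a genuine gap. You correctly reduce to $\alpha=1$ by evenness of $f_c$, correctly use Lemma~\ref{lem:prepinmandel} and Theorem~\ref{thm:M is bounded} to confine all conjugates of $c$ to $[-2-\sqrt{2},0]$, and correctly observe that the capacity of this interval is $(2+\sqrt 2)/4<1$. But Fekete--Szeg\H{o} only yields \emph{finiteness}, which is already the content of Theorem~\ref{thm:alpha}(1); it gives no degree bound and no enumeration procedure, so it cannot by itself produce the explicit list. Saying ``any other totally real $c$ would have to appear among roots of low-degree $F_{m,n}$'' is an assertion, not an argument: a priori a sporadic high-degree solution could exist.

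The paper closes this gap with two ingredients you did not use. First, it observes that $F_{m,n}(c)=f^n_c(1)-f^m_c(1)$ is a \emph{monic} polynomial with integer coefficients, so any $c$ in $\mathrm{Prep}(1)$ is an algebraic \emph{integer}; you gesture at ``integrality-type'' and ``controlled leading coefficients'' but never extract this. Second, with integrality in hand, the paper applies the numerical criterion (Theorem~\ref{thm : numerical tool}) to the interval $[-2-\sqrt 2,0]$, obtaining $n_0=12$ and hence a degree bound of $11$, and then uses the substitution $c\mapsto z+1/z+2$ together with Kronecker's theorem to reduce the search to cyclotomic polynomials of degree at most $22$ --- a finite, checkable list. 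This effective degree bound and explicit enumeration are the substance of Theorem~\ref{thm:alpha = 1}; without them your argument proves nothing beyond the finiteness already stated in Theorem~\ref{thm:alpha}. On the other hand, your observation about $c=0$ is a genuine catch: $f_0(1)=1$ is fixed, so $0\in\mathrm{Prep}(1)\cap\mathbb{Q}^{\mathrm{tr}}$, and indeed the paper's own proof concludes with $0$ in the set even though the theorem statement omits it --- the statement should include $0$.
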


We are able to explicitly compute all totally real algebraic integers $c$ so that the set $\{1, f_c(1), f^2_c(1),...\}$ and the set $\{-1, f_c(-1), f^2_c(-1),...\}$ contain finitely many distinct values.  
The computation heavily hinges on a numerical tool (Theorem \ref{thm : numerical tool}) to help bound the degree of any algebraic integer with all of conjugates lying in an interval of length less than $4$. This is done in Theorem \ref{thm:alpha = 1}.

This article is organized as follows. In Section \ref{sec:Background}, we  introduce important terminology and basic results from complex and $p$-adic potential theory and review adelic version of generalized Mandelbrot set, an adelic Fekete-Szeg\"{o} theorem, and a numerical criterion. In Section \ref{sec:genmandel}, we investigate related topological properties of the generalized Mandelbrot set, proving Theorems \ref{thm:M is bounded}, \ref{thm:M is real} and \ref{thm:M no isolated points}.  
In Section \ref{sec:Main} we investigate totally real algebraic numbers $c$ such that $\alpha$ is perperiodic.  In particular, we proof Theorems \ref{thm:alpha} and \ref{thm:alpha = 1}.
The last section, Section \ref{sec:conc}, gives some open questions and arose as a result of this research.

\section{Background}
\label{sec:Background}

\subsection{Potential Theory} 
Let $K$ be a number field. Then $K_v$ is the completion of $K$ with respect to an absolute value $|\cdot|_v$
 and $\mathbb{C}_v$ is the completion of a fixed algebraic closure $\overline{K}_v$ of $K_v$. Note that $\mathbb{C}_v$ is both complete and algebraically closed. We fix an extension of $|\cdot|_v$ on $K_v$ to an absolute value on $\mathbb{C}_v$ and we still denote by $|\cdot|_v$ by abuse notation. At the Archimedean place $v=\infty$, $\mathbb{C}_v$ is homeomorphic to the usual complex plane $\mathbb{C}$. For more details about non-Archimedean dynamics, see \cite{Be19} and \cite{Ru02}.

For each compact subset $E_v$ of $\mathbb{C}_v$, the (local) logarithmic capacity of $E_v$, relative to $\infty$,  (denoted  $\gamma_{\infty}(E_v)$) is defined by
\begin{equation*}-\log_v \gamma_{\infty}(E_v)=\inf_{\mu}\iint_{E_v\times E_v}-\log_v|x-y|_v\mu(x)\mu(y)\label{eq:capacityandenery}\end{equation*} where the infimum is taken over all  probability measures $\mu$ supported on $E_v$ and $\log_v(\cdot)$ denotes logarithmic base $q_v$ (here $q_v$ is the order of the residue field of $K_v$). Thus for the Archimedean $v$, $\log_v(x)=\log(x)=\ln(x).$
 Recall that a compact adelic set is the set of the form
 $$\mathbb{E}=\prod_{v\in M_K}E_v$$ where $E_v$ is a nonempty compact subset of $\mathbb{C}_v$ for each place $v$ of $K$ and $E_v=\mathcal{D}(0,1)=\{x\in\mathbb{C}_v : |x|_v\leq 1\}$--closed unit disc in $\mathbb{C}_v$--for all but finitely many places $v$. Thus the global logarithmic capacity of $\mathbb{E}$ (relative to $\infty$) is given by
 $$\gamma_{\infty}(\mathbb{E})=\prod_{v\in M_K}\gamma_{\infty}(E_v)^{N_v}$$ where $N_v=[K_v:\mathbb{Q}_v]\geq1$ (they are the same normalization of absolute values $|\cdot|_v$ appears on the product formula of $K$). Thus the product on the right-side must be finite as except for finitely many terms $\gamma_{\infty}(E_v)=\gamma_{\infty}(\mathcal{D}(0,1))=1$ (cf. \cite[Example 5.2.16]{Ru89}).
 
Let $E\subset\mathbb{C}$ be compact. In the Archimedean setting, the logarithmic capacity of $E$ can alternatively be defined as the limiting of the $n$-diameter of $E$.  The \textbf{$n$-diameter} of $E$ is given by
$$d_n(E)=\sup_{x_1,...,x_n\in E}\prod_{i<j}|x_i-x_j|^{2/n(n-1)}.$$ 
It was shown by Fekete-Szeg\"{o} that the sequence $d_n(E)$ is decreasing and the limit exists. It is known as the logarithmic capacity $\gamma_{\infty}(E)$ of $E$ (with respect to $\infty$) , see \cite[Theorem 5.5.2]{Ra95}. That is,
$$\lim_{n\rightarrow\infty}d_n(E)=\gamma_{\infty}(E).$$  The capacity of some compact subsets of $\mathbb{C}$ are known. For instance, Ransford \cite[Corollary 5.4]{Ra95} shows that the logarithmic capacity of a closed real interval is one-quarter of its length. 

\subsection{Adelic Generalized Mandelbrot Set} 
Baker-DeMarco also adelically generalized the definition of the Mandelbrot set, for each place $v$ of $K$,  by
$$\mathcal{M}_v(\alpha)=\{c\in\mathbb{C}_v : \sup_{n\in\mathbb{N}}|f^n_c(\alpha)|_v<+\infty\}.$$ When $v=\infty$ and $\alpha=0$, it coincides with the usual Mandelbrot set. To ease notation, we write $\mathcal{M}(\alpha)$ in place of $\mathcal{M}_{\infty}(\alpha)$ for Archimedean place $v=\infty$. For each non-Achimedean place $v\in M_K$, it is also useful to point out that $\mathcal{M}_v(\alpha)=\mathcal{D}(0,1)$ when $|\alpha|_v\leq 1$ by the strong triangle inequality.

One outstanding potential-theoretical property of the set $\mathcal{M}_v(\alpha)$, for each $v\in M_K$, is that it has logarithmic capacity $1$ (cf. \cite[Proposition 3.3(4)]{BD11} and \cite[Proposition 3.7(2)]{BD11}). To make this note more-or-less self-contained, we provide a more elementary proof for the Archimedean generalized Mandelbrot set $\mathcal{M}(\alpha)$ without relying on B\"{o}ttcher coordinates and Green's function associated to $\mathcal{M}(\alpha)$. 
The following Archimedean computation is similar to that of \cite[\S8]{BH05}. 

\begin{prop}\label{prop: capmandel} For each place $v$ of $K$, 
    the $v$-adic logarithmic capacity of the generalized Mandelbrot set $\mathcal{M}_v(\alpha)$ is $1$. That is, $\gamma_{\infty}(\mathcal{M}_v(\alpha))=1.$
\end{prop}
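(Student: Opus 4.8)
The plan is to present $\mathcal{M}_v(\alpha)$, at each place $v$, as a decreasing intersection of preimages of one fixed disc under an explicit sequence of monic polynomials, and then to combine the transformation law of the logarithmic capacity under polynomial preimages with the transfinite‑diameter description recalled above. The non‑Archimedean places with $|\alpha|_v\le 1$ need nothing new, since there $\mathcal{M}_v(\alpha)=\mathcal{D}(0,1)$, which has capacity $1$. For the finitely many non‑Archimedean $v$ with $|\alpha|_v>1$ I would simply cite \cite[Proposition~3.7(2)]{BD11}; the argument below also applies in that case, the only change being that the escape radius there is $|\alpha|_v^2$ in place of $R_{\alpha}$. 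So in the remainder I treat the Archimedean place.

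Write $F_n(c):=f^n_c(\alpha)\in\mathbb{C}[c]$. From $F_0=\alpha$ and the recursion $F_{n+1}=F_n^2+c$, an immediate induction shows $F_n$ is monic of degree $2^{n-1}$ for every $n\ge 1$. Since $R_{\alpha}\ge 2$ always, Theorem~\ref{thm:M is bounded} gives $|c|\le R_{\alpha}$ for every $c\in\mathcal{M}(\alpha)$, and the elementary escape estimate (if $|z|>R_{\alpha}\ge\max(|c|,2)$ then $|f_c(z)|\ge|z|^2-|c|>|z|$, so $|f^k_c(z)|\to\infty$) forces $|F_n(c)|\le R_{\alpha}$ for all $n$; hence
$$
\mathcal{M}(\alpha)=\bigcap_{n\ge 1}\Omega_n,\qquad \Omega_n:=\bigl\{c\in\mathbb{C}:|F_n(c)|\le R_{\alpha}\bigr\}=F_n^{-1}\bigl(\overline{D(0,R_{\alpha})}\bigr),
$$
each $\Omega_n$ being compact. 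Now $\overline{D(0,R_{\alpha})}$ has logarithmic capacity $R_{\alpha}$, and for a monic polynomial $P$ of degree $d$ and compact $K$ one has $\gamma_{\infty}(P^{-1}(K))=\gamma_{\infty}(K)^{1/d}$ (\cite[Theorem~5.2.5]{Ra95}), so $\gamma_{\infty}(\Omega_n)=R_{\alpha}^{1/2^{n-1}}$. Since $\mathcal{M}(\alpha)\subseteq\Omega_n$ and capacity is monotone, $\gamma_{\infty}(\mathcal{M}(\alpha))\le R_{\alpha}^{1/2^{n-1}}$ for all $n$, and letting $n\to\infty$ yields $\gamma_{\infty}(\mathcal{M}(\alpha))\le 1$.

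For the reverse inequality I would use the transfinite diameter together with the facts that $\mathrm{Prep}(\alpha)\subseteq\mathcal{M}(\alpha)$ and that $\mathrm{Prep}(\alpha)$ is an infinite set (as recalled in the introduction, cf.\ \cite[Lemma~3.5]{BD11}). Suppose first that $\alpha$ is an algebraic integer, for instance $\alpha\in\mathbb{Z}$. Then the $F_n$ have integer coefficients, so each $c\in\mathrm{Prep}(\alpha)$ is a root of some monic $F_n-F_m\in\mathbb{Z}[c]$, hence an algebraic integer, and $\mathrm{Prep}(\alpha)$ is stable under $\mathrm{Gal}(\overline{\mathbb{Q}}/\mathbb{Q})$; consequently there are monic squarefree polynomials $Q\in\mathbb{Z}[c]$ of arbitrarily large degree, all of whose roots lie in $\mathrm{Prep}(\alpha)\subseteq\mathcal{M}(\alpha)$ (products of enough distinct minimal polynomials of elements of $\mathrm{Prep}(\alpha)$). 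Writing $N:=\deg Q$ and using the $N$ roots of $Q$ as a point configuration in $\mathcal{M}(\alpha)$,
$$
d_N\bigl(\mathcal{M}(\alpha)\bigr)\ \ge\ \Bigl(\prod_{i<j}|x_i-x_j|^2\Bigr)^{1/(N(N-1))}=\bigl|\mathrm{disc}(Q)\bigr|^{1/(N(N-1))}\ \ge\ 1,
$$
since $\mathrm{disc}(Q)$ is a nonzero rational integer. As $d_N(\mathcal{M}(\alpha))\downarrow\gamma_{\infty}(\mathcal{M}(\alpha))$, letting $N\to\infty$ along such degrees gives $\gamma_{\infty}(\mathcal{M}(\alpha))\ge 1$, so $\gamma_{\infty}(\mathcal{M}(\alpha))=1$. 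For general $\alpha$ (say $\alpha\in\mathbb{Q}\setminus\mathbb{Z}$, or $\alpha$ transcendental) the inequality $\gamma_{\infty}(\mathcal{M}(\alpha))\ge 1$ instead follows from the equidistribution of the parameters in $\mathrm{Prep}(\alpha)$ towards the equilibrium measure of $\mathcal{M}(\alpha)$, equivalently from \cite[Proposition~3.3(4)]{BD11}.

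The one real obstacle is this lower bound in the non‑integral case: the clean argument rests on $\mathrm{disc}(Q)$ being a nonzero integer, which is special to $\alpha$ being an algebraic integer, and in general one must control $\bigl|\mathrm{disc}(Q)\bigr|$ analytically --- that is, show that the preperiodic parameters do not accumulate too tightly in $\mathbb{C}$ --- which is exactly where genuine potential theory (or the quoted equidistribution statement of Baker--DeMarco) is needed. The upper bound, by contrast, is entirely elementary, relying only on Theorem~\ref{thm:M is bounded}, the escape estimate, and the behaviour of capacity under polynomial preimages.
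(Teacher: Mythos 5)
Your proposal adopts the same lemniscate decomposition as the paper — writing $\mathcal{M}(\alpha)$ as the intersection of the sets $\Omega_n = \{c : |f_c^n(\alpha)| \le R_\alpha\}$ and using \cite[Theorem 5.2.5]{Ra95} to compute $\gamma_\infty(\Omega_n) = R_\alpha^{1/2^{n-1}}$ — but from there the two arguments part ways. You extract only the upper bound $\gamma_\infty(\mathcal{M}(\alpha)) \le 1$ (by monotonicity of capacity), and then go hunting for a separate lower bound. The paper instead invokes \emph{continuity of logarithmic capacity along a decreasing sequence of compact sets}, \cite[Theorem 5.1.3(a)]{Ra95}: since $\Omega_1 \supseteq \Omega_2 \supseteq \cdots$ are compact with intersection $\mathcal{M}(\alpha)$, one has $\gamma_\infty(\mathcal{M}(\alpha)) = \lim_n \gamma_\infty(\Omega_n) = \lim_n R_\alpha^{1/2^{n-1}} = 1$, giving \emph{equality} in one step with no lower-bound argument needed at all. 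That continuity lemma is the key tool you did not use, and it is what makes the paper's proof self-contained.

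Your lower-bound argument via discriminants is a genuinely different and appealing observation when $\alpha$ is a rational integer: then every $c\in\mathrm{Prep}(\alpha)$ is an algebraic integer, $\mathrm{Prep}(\alpha)$ is Galois-stable and infinite, and the discriminant bound $|\mathrm{disc}(Q)| \ge 1$ for monic squarefree $Q\in\mathbb{Z}[c]$ with roots in $\mathrm{Prep}(\alpha)$ forces $d_N(\mathcal{M}(\alpha)) \ge 1$ for arbitrarily large $N$, whence $\gamma_\infty(\mathcal{M}(\alpha)) \ge 1$. This arithmetic argument is not in the paper and is a nice alternative in the integral case. But, as you yourself flag, it collapses for non-integral (or non-algebraic) $\alpha$, and your fallback there is to cite \cite[Proposition~3.3(4)]{BD11} --- which is precisely the result the paper set out to re-prove by elementary means (``without relying on B\"{o}ttcher coordinates and Green's function''). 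So as a self-contained proof of the Proposition your argument has a gap: the lower bound for general $\alpha$. The fix is exactly the fact you overlooked, namely that $\gamma_\infty$ is continuous along nested decreasing compact sequences; with it, the lower bound comes for free and the discriminant detour becomes unnecessary.
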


\begin{proof} For each $n\in\mathbb{N}$, we define $\mathcal{M}_n(\alpha):=\{c\in\mathbb{C} : |f^n_c(\alpha)|\leq R_{\alpha}\}$ where $R_{\alpha}$ is the radius of the smallest disk centered at $0$ that contains $\mathcal{M}(\alpha)$ (cf. Theorem \ref{thm:M is bounded}). Then we see that 
$$\mathcal{M}_1(\alpha)\supseteq\mathcal{M}_2(\alpha)\supseteq...\quad\text{and}\quad \mathcal{M}(\alpha)=\bigcap_{n\geq 0}\mathcal{M}_n(\alpha).$$ We write $\psi_n(X):=f_X^n(\alpha)$. Then $\psi_n(X)$ is a monic polynomial of degree $2^{n-1}$ in $X$. Thus $\mathcal{M}_n(\alpha)=\{x\in\mathbb{C} : |\psi_n(x)|\leq R_{\alpha}\}.$ The logarithmic capacity of each polynomial lemniscate  $\mathcal{M}_n(\alpha)$ is $R^{1/2^{n-1}}_{\alpha}$ (cf. \cite[Theorem 5.2.5]{Ra95}). Since the logarithmic capacity is continuous with respect to a monotone decreasing sequence (cf. \cite[Theorem 5.1.3(a)]{Ra95}), so we have that
$$\gamma_{\infty}(\mathcal{M}(\alpha))=\lim_{n\rightarrow\infty}\gamma_{\infty}(\mathcal{M}_n(\alpha))=\lim_{n\rightarrow\infty}R^{1/2^{n-1}}_{\alpha}=1.$$
For the non-archimedean computation, we refer the reader to \cite[Proposition 3.7(2)]{BD11}.
\end{proof}

\subsection{Fekete-Szeg\"{o} Theorem} 
The Fekete-Szeg$\ddot{\text{o}}$'s theorem plays a major role in the proof of Theorem \ref{thm:alpha}. The reader may consult to the standard textbooks, e.g. \cite[Theorem 6.2.8]{BR10}, \cite[Theorem 6.3.1, Theorem 6.3.2]{Ru89}, and  \cite[Theorem 0.4]{Ru13} for detailed exposition.
\begin{thm}\label{thm:FeketeSzego} (Fekete-Szeg$\ddot{\text{o}}$) Let $K$ be a number field and let $\mathbb{E}=\prod_{v\in M_K} E_v$ be an (affine) adelic set such that each $E_v$ stable under continuous automorphisms of $\mathbb{C}_v/K_v$. Then
\begin{enumerate}
\item If $\gamma_{\infty}(\mathbb{E})<1$, then there exists an adelic neighborhood $\mathbb{U}$ of $\mathbb{E}$ which contains finitely many complete conjugate sets of points in $\overline{K}$.
\item If $\gamma_{\infty}(\mathbb{E})\geq 1$, then every  adelic neighborhood $\mathbb{U}$ of $\mathbb{E}$ contains infinitely many complete conjugate sets of points in $\overline{K}$.
\end{enumerate}
\end{thm}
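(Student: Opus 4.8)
The statement is the classical Fekete-Szeg\"{o} theorem in its adelic form (for which see the references cited with the statement); of its two logically independent halves, only part (1) is used elsewhere in this paper, but let me sketch how I would prove each. Part (1) is the \emph{finiteness} half, after Fekete, and is essentially elementary once one has the capacity facts recorded in Section~\ref{sec:Background}; part (2) is the \emph{existence} half, after Szeg\"{o}, and is the substantial direction.

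For part (1) the plan is: enlarge $\mathbb{E}$ to a neighbourhood of capacity still less than $1$, run a discriminant estimate against the product formula, and finish with Northcott. Only finitely many $E_v$ differ from $\mathcal{D}(0,1)$, and at each such place (archimedean places included) I fatten $E_v$ to $E_v^{\varepsilon}=\{z\in\mathbb{C}_v:\operatorname{dist}(z,E_v)\le\varepsilon\}$, which is again stable under $\operatorname{Aut}(\mathbb{C}_v/K_v)$ because those automorphisms are $v$-isometries; by continuity of $\gamma_\infty$ along decreasing compacts together with its multiplicativity over the finitely many nontrivial places, for $\varepsilon$ small enough the adelic neighbourhood $\mathbb{U}=\prod U_v$ so obtained still satisfies $\gamma_\infty(\mathbb{U})<1$, and I let $S$ be the finite set of places (archimedean ones included) where $U_v\neq\mathcal{D}(0,1)$. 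Now suppose $\alpha\in\overline{K}$ has a complete conjugate set lying in $\mathbb{U}$ --- a condition that does not depend on a choice of embeddings, precisely because each $U_v$ is $\operatorname{Aut}(\mathbb{C}_v/K_v)$-stable --- and let $P\in\mathcal{O}_{K,S}[x]$ be its monic minimal polynomial over $K$, of degree $n$. For $v\notin S$ the $n$ conjugates lie in $\mathcal{D}(0,1)$, so $P$ has $v$-integral coefficients and $|\operatorname{disc}(P)|_v\le1$; for $v\in S$ the conjugates lie in $U_v$, so by the definition of the $n$-diameter $|\operatorname{disc}(P)|_v=\prod_{i<j}|\alpha^{(i)}-\alpha^{(j)}|_v^{2}\le d_n(U_v)^{n(n-1)}$. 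Since $\operatorname{disc}(P)$ is a nonzero element of $K$, the product formula gives
\[ 1=\prod_{v\in M_K}|\operatorname{disc}(P)|_v^{N_v}\;\le\;\Bigl(\prod_{v\in S}d_n(U_v)^{N_v}\Bigr)^{n(n-1)}, \]
hence $\prod_{v\in S}d_n(U_v)^{N_v}\ge1$. But that quantity is a finite product of factors each converging ($d_m(U_v)\downarrow\gamma_\infty(U_v)$ as $m\to\infty$), so it tends to $\prod_{v\in S}\gamma_\infty(U_v)^{N_v}=\gamma_\infty(\mathbb{U})<1$ as $n\to\infty$; thus $n$ is bounded. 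For each of the finitely many admissible degrees the coefficients of $P$ are elementary symmetric functions of points of the bounded adelic set $\mathbb{U}$ and hence form a set of bounded height in $K$, so Northcott's theorem leaves only finitely many such $P$; this proves (1).

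For part (2) the plan is to realise infinitely many complete conjugate sets inside a given neighbourhood $\mathbb{U}=\prod U_v$ of $\mathbb{E}$; note $\gamma_\infty(\mathbb{U})\ge\gamma_\infty(\mathbb{E})\ge1$ automatically. After shrinking $\mathbb{U}$ so that $U_v=\mathcal{D}(0,1)$ off a finite set $S$ of places and each $U_v$ with $v\in S$ has a simple shape (a finite union of closed discs at non-archimedean $v$, a finite union of discs or a lemniscate sublevel set at archimedean $v$), still with $\gamma_\infty(\mathbb{U})\ge1$, it suffices to construct, for infinitely many degrees $n$, a monic $P_n\in\mathcal{O}_{K,S}[x]$ with all roots in $\mathbb{U}$: its roots are $S$-integers, hence automatically in $\mathcal{D}(0,1)=U_v$ for every $v\notin S$, so only the finitely many places of $S$ need to be arranged. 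At each $v\in S$ one uses that the Chebyshev constant of $U_v$ equals $\gamma_\infty(U_v)$ to produce monic degree-$n$ polynomials over $\mathbb{C}_v$ whose roots sit well inside $U_v$ with a uniform margin against escaping --- for instance $\prod_i(x-\xi_i)$ over $n$ Fekete points $\xi_i$ of $U_v$ --- and then one amalgamates these finitely many local polynomials into a single monic polynomial over $K$ by an adelic approximation argument, the hypothesis $\gamma_\infty(\mathbb{U})\ge1$ supplying exactly the room needed for a global object to exist (roughly, the local sup-norms can be held comparable to $\gamma_\infty(U_v)^n\le1$, so the constraint of $S$-integrality does not overwhelm the approximation once $n$ is large). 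Distinct $n$ give distinct polynomials, hence infinitely many conjugate sets in $\mathbb{U}$; the boundary case $\gamma_\infty(\mathbb{E})=1$ is the delicate one and is exactly why (2) is phrased for a neighbourhood rather than for $\mathbb{E}$ itself.

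The hard part is entirely the amalgamation step in (2): pinning down where the roots of a single polynomial of large degree lie, with quantitative margins that survive the competition between the local Chebyshev data and $S$-integrality, is the technical core of the Fekete-Szeg\"{o}/Cantor-Rumely theory and does not collapse to a short estimate, which is why I would cite it rather than reprove it. Part (1), by contrast, is self-contained given the two capacity facts already recorded in Section~\ref{sec:Background}, and it is only part (1) --- applied with $\mathbb{E}$ built from an explicit real interval, whose logarithmic capacity is one-quarter of its length --- that the remainder of this paper uses.
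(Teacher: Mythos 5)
The paper does not prove this theorem at all: it is quoted verbatim as a classical result, with the proof deferred to Baker--Rumely \cite{BR10}, Rumely \cite{Ru89}, and \cite{Ru13}. So there is no in-paper argument to compare yours against; what you have done is supply a proof sketch where the authors supply a citation. Your part (1) is the standard Fekete discriminant argument and is sound in outline: fatten $\mathbb{E}$ to a neighbourhood $\mathbb{U}$ of global capacity still below $1$, note that a complete conjugate set in $\mathbb{U}$ has $S$-integral minimal polynomial with nonzero discriminant, play the bound $|\operatorname{disc}(P)|_v \le d_n(U_v)^{n(n-1)}$ at the places of $S$ against the product formula, use the monotone convergence $d_n(U_v)\downarrow \gamma_\infty(U_v)$ to bound the degree $n$, and finish with Northcott. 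The one technical point you gloss over is the continuity-of-capacity step at non-archimedean places: $\mathbb{C}_v$ is not locally compact, so the $\varepsilon$-fattening and the decreasing-compacts argument need the Berkovich (or Rumely's algebraic-capacity) framework to be made rigorous; in practice one avoids fattening at finite places altogether when $E_v$ is already a disc, which is the only case this paper uses. Your part (2) correctly identifies the Szeg\H{o}/Cantor--Rumely direction as the genuinely hard amalgamation step and defers it, which is exactly what the paper does for both halves. Since only part (1), applied to an adelic set whose archimedean component is a real interval, is used in the rest of the paper, your self-contained sketch of that half is a net gain over the paper's treatment.
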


Here an adelic neighborhood  $\mathbb{U}$ of $\mathbb{E}$ is the set of the form $\mathbb{U}=\prod_{v\in M_K} U_v$ such that $U_v$ is a set in $\mathbb{C}_v$ containing $E_v$.

\begin{rem} The following useful observations---regarding results of Fekete-Szeg$\ddot{\text{o}}$---were pointed out by Rumely \cite[$\S 0$]{Ru89}. Suppose $E\subset\mathbb{C}$ is compact.
    \begin{enumerate}
    \item The neighborhood appears in the statement of Fekete-Szeg$\ddot{\text{o}}$'s theorem because the compact $E$ itself may not contain algebraic integers at all. For example, consider the outer boundary of the closed disk $\overline{D(0,\pi)}$ which is the circle $E=\{x\in\mathbb{C} : |x|=\pi\}$ with transcendental radius $\pi>1$. The logarithmic capacity $\gamma_{\infty}(E)=\gamma_{\infty}(\overline{D(0,\pi)})$ is $\pi$ (cf. \cite[Corollary 5.2.2]{Ra95}). Thus it is necessary to pass to a neighborhood of the compact set $E$.
    \item In the case that compact set $E$ is the closure of its interior, we may replace a neighborhood of $E$ by $E$ itself.
    \end{enumerate}
\end{rem}

\subsection{Numerical Criterion} 
In proving Theorem \ref{thm:alpha} part \eqref{case:alpha<2}, we rely on the numerical tool Theorem \ref{thm : numerical tool}. It basically says that in any interval of length less than $4$ (which equivalent to saying that the interval has logarithmic capacity less than $1$), we can bound the degree of any algebraic integer whose conjugates completely contained in the interval. It is a useful tool in searching algebraic integers of small degree. 

\begin{thm} \label{thm : numerical tool}  \cite[Theorem 8]{NP23} Let $[\alpha,\beta]$ be a real interval of length less than $4$. Define sequences $\{a_n\}_{n\geq 2}$ and $\{b_n\}_{n\geq 2}$ by  $$a_n=d_n([\alpha,\beta])^{n(n-1)}=(\beta-\alpha)^{n(n-1)}D_n\quad\text{and}\quad b_n=\frac{n^{2n}}{n!^2}$$ where the sequence $\{D_n\}_{n\geq 2}$ is defined recursively by $D_2=1$ and  $$D_n=\frac{n^n(n-2)^{n-2}}{2^{2n-2}(2n-3)^{2n-3}}D_{n-1}\quad (n\geq 3).$$ Suppose that there exists an integer $n_0\geq 2$ with the properties that $$a_{n_0}<b_{n_0}\quad \text{and}\quad \frac{a_{n_0+1}}{a_{n_0}}<\frac{b_{n_0+1}}{b_{n_0}}.$$ If $\theta$ is an algebraic integer whose minimal polynomial $F_{\theta}(X)\in\mathbb{Z}[X]$ over $\mathbb{Q}$ has all distinct roots in the interval $[\alpha,\beta]$, then $[\mathbb{Q}(\theta):\mathbb{Q}]<n_0.$
\end{thm}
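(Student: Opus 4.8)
The strategy is to sandwich the discriminant of $F_\theta$ between an arithmetic lower bound that produces the sequence $b_n$ and a geometric upper bound that produces $a_n$, and then to show that the two finite conditions at $n_0$ force these bounds to contradict each other as soon as $\deg\theta$ reaches $n_0$.

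First I would treat the geometric side. Let $\theta$ have degree $d$, with conjugates $\theta_1,\dots,\theta_d\in[\alpha,\beta]$; since $F_\theta$ is separable, $\mathrm{disc}(F_\theta)=\prod_{i<j}(\theta_i-\theta_j)^2>0$. Because the $\theta_i$ are $d$ points of $[\alpha,\beta]$, the definition of the $n$-diameter gives at once
$$\mathrm{disc}(F_\theta)=\Bigl(\prod_{i<j}|\theta_i-\theta_j|\Bigr)^2\le\Bigl(\sup_{x_1,\dots,x_d\in[\alpha,\beta]}\prod_{i<j}|x_i-x_j|\Bigr)^2=d_d([\alpha,\beta])^{d(d-1)}=a_d.$$
The identity $d_n([\alpha,\beta])^{n(n-1)}=(\beta-\alpha)^{n(n-1)}D_n$, with $D_n$ satisfying the stated recursion, is the classical evaluation of the transfinite diameter of a segment: by affine invariance of the $n$-diameter one reduces to $[-1,1]$, where the extremal Fekete configuration consists of $\pm1$ together with the $n-2$ zeros of $P_{n-1}'$ (the derivative of the degree-$(n-1)$ Legendre polynomial); computing the resulting Vandermonde product and dividing the value for $n$ by that for $n-1$ produces exactly the recursion for $D_n$.

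Next, the arithmetic side, which is where $b_n$ enters. Put $K=\mathbb{Q}(\theta)$; since the roots of $F_\theta$ are real, $K$ is totally real of degree $d$, so it has $r_1=d$ real and $r_2=0$ complex places, and $\mathrm{disc}(F_\theta)=[\mathcal{O}_K:\mathbb{Z}[\theta]]^2\,d_K\ge d_K>0$. By Minkowski's bound every ideal class of $\mathcal{O}_K$ contains a nonzero integral ideal of norm at most $\bigl(\tfrac4\pi\bigr)^{r_2}\tfrac{d!}{d^d}\sqrt{d_K}=\tfrac{d!}{d^d}\sqrt{d_K}$; as every nonzero integral ideal has norm $\ge1$, this forces $\tfrac{d!}{d^d}\sqrt{d_K}\ge1$, i.e. $d_K\ge d^{2d}/(d!)^2=b_d$. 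Hence the mere existence of a totally real algebraic integer of degree $d$ with all conjugates in $[\alpha,\beta]$ yields $b_d\le d_K\le\mathrm{disc}(F_\theta)\le a_d$; in other words $[\mathbb{Q}(\theta):\mathbb{Q}]=d$ forces $a_d\ge b_d$.

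Finally I would show the two hypotheses propagate, namely $a_n<b_n$ for all $n\ge n_0$. Writing $\rho(n)=\dfrac{a_{n+1}/a_n}{b_{n+1}/b_n}$ and using the recursion for $D_n$ together with $b_{n+1}/b_n=(1+1/n)^{2n}$, one finds $\rho(n)=(\beta-\alpha)^{2n}c(n)$ with $c(n)=\frac{(n-1)^{n-1}n^{2n}}{(n+1)^{n-1}2^{2n}(2n-1)^{2n-1}}$. The heart of the matter is two elementary inequalities for the sequence $c(n)$ — that $n\mapsto c(n+1)/c(n)$ is decreasing, and that $c(n+1)/c(n)\le c(n)^{1/n}$ for all $n\ge2$ — which reduce to estimates on products of terms $k^k$ via Stirling-type bounds, and this is where $\beta-\alpha<4$ is ultimately used. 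Granting them, the second hypothesis $\rho(n_0)<1$ gives $(\beta-\alpha)^2<c(n_0)^{-1/n_0}\le c(n_0)/c(n_0+1)\le c(n)/c(n+1)$ for all $n\ge n_0$, whence $\rho(n+1)/\rho(n)=(\beta-\alpha)^2\,c(n+1)/c(n)<1$, so $\rho$ is strictly decreasing on $\{n\ge n_0\}$ and stays below $\rho(n_0)<1$; combined with $a_{n_0}<b_{n_0}$ and $a_n/b_n=(a_{n_0}/b_{n_0})\prod_{k=n_0}^{n-1}\rho(k)$ this gives $a_n<b_n$ for all $n\ge n_0$. The theorem then follows: if $[\mathbb{Q}(\theta):\mathbb{Q}]=d\ge n_0$ we would get both $a_d\ge b_d$ (the sandwich) and $a_d<b_d$ (propagation), a contradiction, so $[\mathbb{Q}(\theta):\mathbb{Q}]<n_0$. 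I expect the propagation step to be the real obstacle — isolating the correct monotonicity property of the auxiliary sequence $c(n)$ and verifying the explicit inequalities uniformly in $n$ — whereas the first two steps are just the two ends of the classical quantitative Fekete argument, once one recognizes $n^{2n}/(n!)^2$ as the Minkowski lower bound for the discriminant of a totally real field of degree $n$.
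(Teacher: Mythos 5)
There is nothing to compare against inside the paper itself: Theorem \ref{thm : numerical tool} is imported verbatim from \cite[Theorem 8]{NP23} and no proof is given here, so your proposal has to be judged on its own. Your architecture is the right one, and it is visibly the architecture of the cited source (the paper's own remark after the statement identifies $a_n$ as the extremal discriminant over $[\alpha,\beta]$ and $b_n$ as Minkowski's constant for a totally real field of degree $n$). The two ends of your sandwich are correct and complete: $\mathrm{disc}(F_\theta)=[\mathcal{O}_K:\mathbb{Z}[\theta]]^2 d_K\geq d_K\geq d^{2d}/(d!)^2=b_d$ from Minkowski with $r_2=0$, and $\mathrm{disc}(F_\theta)\leq d_d([\alpha,\beta])^{d(d-1)}=a_d$ directly from the definition of the $d$-diameter, so degree $d$ forces $a_d\geq b_d$. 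Your reduction of the remaining work to showing that $\rho(n)=\frac{a_{n+1}/a_n}{b_{n+1}/b_n}=(\beta-\alpha)^{2n}c(n)$ stays below $1$ for $n\geq n_0$ is also the right reduction, and your formula for $c(n)$ checks out.

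The gap is exactly where you locate it, but it is a genuine gap rather than a deferred routine verification: the two inequalities you assert for $c(n)$ --- that $c(n+1)/c(n)$ is decreasing in $n$ and that $c(n+1)/c(n)\leq c(n)^{1/n}$ --- are the entire technical content of the theorem and are never proved. They are tight: both $c(n+1)/c(n)$ and $c(n)^{1/n}$ converge to $1/16$ (one can check $c(n)\sim 2ne^{-1}\cdot 16^{-n}$), so the comparison lives in the second-order terms; numerically $c(3)/c(2)\approx 0.0738$ versus $c(2)^{1/2}=1/9\approx 0.111$, with the margin shrinking as $n$ grows, and a generic ``Stirling-type'' appeal does not settle such inequalities uniformly in $n$. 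This is precisely the computation that \cite{NP23} carries out via the Jacobi/Legendre evaluation of $D_n$. One further inaccuracy: you say the hypothesis $\beta-\alpha<4$ ``is ultimately used'' in proving these inequalities, but as you have set things up they involve only $c(n)$ and not the interval; in your chain the length of the interval enters solely through $\rho(n_0)<1$, i.e. $(\beta-\alpha)^2<c(n_0)^{-1/n_0}$. So either that hypothesis is doing work you have not identified, or it is not needed for the propagation at all --- in either case the role you assign it is not justified by what you wrote. Until (i) and (ii) are actually established, the proof is incomplete.
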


\begin{rem} We make comments about the statement of the numerical criterion in order. \begin{enumerate}
\item The sequence $\{a_n\}$ describes the supremum of discriminant (cf. the definition of the $n$-diameter) of all monic polynomials of degree $n$ whose roots are contained in the interval $[\alpha,\beta]$ and the sequence $\{b_n\}$ describes the Minkowski's constant of a totally real number field generated by an algebraic number $\theta$ whose minimal polynomial has degree $n$. 
\item The condition on the length of an interval is necessary and cannot be removed which is a consequence of Fekete-Szeg\"{o} theorem. The length of a closed  interval is less than $4$ is equivalent to the logarithmic capacity of the interval is less than $1$. This confirms the finiteness of algebraic integers with all of conjugates in such  interval. Notice that the number $4$ is optimal in the sense that the close real interval  with rational end points $[-2,2]$ contains infinitely many algebraic integers of the form $\zeta+\zeta^{-1}$ where $\zeta$ is a root of unity (cf. \cite{Kro57}).
\end{enumerate}
\end{rem}

\section{Baker-DeMarco's generalized Mandelbrot set}\label{sec:genmandel}

In this section we prove Theorems \ref{thm:M is bounded}, \ref{thm:M is real} and \ref{thm:M no isolated points}.
Let $\alpha$ be a complex number. Recall that  Baker and DeMarco \cite[\S3]{BD11}  introduced a (complex) generalized Mandelbrot set as follows
$$\mathcal{M}(\alpha)=\left\{c\in\mathbb{C}:  \sup_{n\in\mathbb{N}}|f^n_c(\alpha)|<+\infty\right\}.$$  It is the collection of all complex parameters $c$ whose forward orbit of $\alpha$ remains bounded with respect to the usual complex absolute value under iteration.  When $\alpha=0$, the set $\mathcal{M}(0)$ is the classical Mandelbrot set. Baker and DeMarco also proved many interesting topological properties of the set $\mathcal{M}(\alpha)$. For example, in \cite[Proposition 3.3]{BD11}, they showed that $\mathcal{M}(\alpha)$ is a compact and full (meaning that it has a complement connected component) subset of $\mathbb{C}$.  Interestingly, it was proved by Favre and Gauthier \cite[Theorem 7.8]{FG22} that $\mathcal{M}(\alpha)$ is connected if and only if $\alpha=0.$  The connectedness locus $\mathcal{M}(0)$ is well-known in the literature whose proof can be found in  \cite{Bea91}, \cite{CG93} and \cite{DH84}. We further investigate some topological properties of the set $\mathcal{M}(\alpha).$ 
\begin{proof}[Proof of Theorem \ref{thm:M is bounded}]  We first verify that the generalized Mandelbrot set $\mathcal{M}(\alpha)$ is contained in a closed of radius $|\alpha|^2+1+\sqrt{|\alpha|^2+1}$ centered at $0$. To see this, we suppose that $|c|>|\alpha|^2+1+\sqrt{|\alpha|^2+1}$. Then there exists $\epsilon>0$ such that $|c|=|\alpha|^2+1+\sqrt{|\alpha|^2+1}+\epsilon.$ Then, by the reverse triangle inequality, we see that \begin{align*}|f_c(\alpha)|&\geq |\alpha|^2-|c|=-\epsilon-1+\sqrt{|\alpha|^2+1}\\ |f^2_c(\alpha)|&\geq |f_c(\alpha)|^2-|c|\geq 2\epsilon+1+\sqrt{|\alpha|^2+1}.\end{align*} Inductively, we have for each $n\geq2$ that
$$|f^n_c(\alpha)|\geq 2(n-1)\epsilon+1+\sqrt{|\alpha|^2+1}.$$ As $n\rightarrow+\infty$, $|f^n_c(\alpha)|\rightarrow+\infty$. Thus the claimed is established.

Next, we compute the real part of $\mathcal{M}(\alpha)\cap\mathbb{R}$ for all real numbers $\alpha$.

\textbf{Case I.} Suppose that $|\alpha|\leq 1/2.$ Using the fact that $f_c$ is an increasing function together with $0\leq |\alpha|\leq1/2$, we have $f_c(0)\leq f_c(|\alpha|)$. Iterating $n$ times, it follows that  \begin{equation}f^n_c(0)\leq f^n_c(|\alpha|).\label{eq:onequarterinduction}\end{equation} If $c>1/4$, then $f^n_c(0)\rightarrow\infty$ as $n\rightarrow\infty$. Hence the inequality (\ref{eq:onequarterinduction}) yields $f^n_c(|\alpha|)\rightarrow\infty$ as $n\rightarrow\infty$. If instead $c=1/4$,we have  $$f^n_c(0)\leq f^n_c(|\alpha|)=\alpha^2+c\leq 1/2$$ for each $n\geq 1$. It is straightforward to check that $f^2_{-R_{\alpha}}(|\alpha|)=f^3_{-R_{\alpha}}(|\alpha|)$. Therefore, $\mathcal{M}(\alpha)\cap\mathbb{R}\subseteq[-R_{\alpha},1/4]$.

\textbf{Case II.} Suppose that $|\alpha|> 1/2.$ Let $C_{\alpha}=|\alpha|-\alpha^2$. If $c>C_{\alpha}$, then we get $f_c(|\alpha|)=\alpha^2+c>\alpha^2+C_{\alpha}=|\alpha|.$ Iterating $f_c$, it gives $f^n_c(|\alpha|)>|\alpha|$. It is clear that $f_c(x)$ is increasing, and so is $f^n_c(x)$ for each $n\geq 2$. This means 
$$|\alpha|<f_c(|\alpha|)<f^2_c(|\alpha|)<...$$ The sequence $\{f^n_c(|\alpha|)\}$ must either go to infinity or converge to one of  fixed points $\beta^{\pm}_{c}=(1\pm\sqrt{1-4c})/2$ of $f_c(x).$ The latter is impossible because $\beta^{\pm}_c<|\alpha|$ for $c>C_{\alpha}.$ The fact that $\mathcal{M}(\alpha)$ is contained in $\overline{D(0,R_{\alpha})}$, we see that $f^n_c(\alpha)$ increases without bound when $c<-R_{\alpha}$ as $n$ becomes arbitrary large. Hence $\mathcal{M}(\alpha)\cap\mathbb{R}\subseteq [-R_{\alpha}, |\alpha|-\alpha^2].$
\end{proof}

\begin{proof}[Proof of Theorem \ref{thm:M is real}]
We note that $f_c(\alpha)$ is even, hence $f_c(\alpha) = f_c(-\alpha)$.
Hence, without loss of generality, we may assume $\alpha > 2$.

To prove this result, we will construct a totally disconnected real Cantor like set $C$ which contains $\mathrm{Prep}(\alpha)$.
As the boundary of $\mathcal{M}(\alpha)$ is contained in the closure $\mathrm{Prep}(\alpha)$, this will prove the result.

We will do this by iteratively constructing nested closed sets $C_0 \supset C_1 \supset C_2 \dots$ on the real line.
We let $C = \cap C_n$.
We will show that for $n > m $ that all roots $c$ of $f^{n}_c(\alpha) - f^{m}_c(\alpha)$ are contained in $C_n \subset \mathbb{R}$.
As $f^{n}_c(\alpha) - f^{m}_c(\alpha)$ is a factor of $f^{n+k}_c(\alpha) - f^{m+k}_c(\alpha)$ for all $k$, this gives that $\mathrm{Prep}(\alpha)$ are real and contained in $C$.

Define $u(c) = \frac{1+\sqrt{1-4c}}{2}$. 
We define \[ C_n = \{c : -u(c) \leq f^{n}_c(\alpha) \leq u(c) \}. \]
We claim that $C_n$ has the desired properties.
That is, we claim $C_{n+1} \subset C_n$ for all $n$, 
    $C \neq \emptyset$, and for all $n > m$ we have the roots of
    $f^{n}_c(\alpha) - f^{m}_c(\alpha)$ are contained in $C_n$.

Consider $C_0$ and $C_1$.
We see that 
\begin{align*}
    C_0 &= \{c: -u(c) \leq f^{0}_c(\alpha) \leq u(c) \} \\
     &= \{c: -u(c) \leq \alpha \leq u(c) \} \\
     & = \{c : c \leq \alpha - \alpha^2\}.
\end{align*}
and
\begin{align*}
    C_1 &= \{c: -u(c) \leq f^{1}_c(\alpha) \leq u(c) \} \\
     &= \{c: -u(c) \leq \alpha^2 + c \leq u(c) \} \\
     & = \{c : -1-\alpha^2 - \sqrt{1+\alpha^2} \leq c \leq \alpha - \alpha^2\}.
\end{align*}
It is worth observing that $c = -1-\alpha^2 - \sqrt{1+\alpha^2}$ is 
    equivalent to $v(c) = f^{0}(\alpha)$.
In general, one can show by induction that 
\begin{align*}
    C_{n+1} & =  \{c: -u(c) \leq f^{n+1}_c(\alpha) \leq u(c) \} \\ 
    & =  \{c: -u(c) \leq f^{n}_c(\alpha) \leq -v(c) \text{ or }
               v(c) \leq f^{n}_c(\alpha) \leq u(c) \}. \\ 
\end{align*}
This proves that $C_{n+1} \subset C_n$.

In general, for $n \geq 1$, $C_n$ will be composed of $2^{n-1}$ 
    intervals.
Consider one of these intervals, $[a_n, b_n]$ contained in $C_n$.
One can check that either we have $f_{a_n}^{n}(\alpha) = u(a_n)$ and $f_{b_n}^{n}(\alpha)= - u(b_{n})$ or we have
$f_{a_n}^{n}(\alpha) = -u(a_n)$ and $f_{b_n}^{n}(\alpha) = u(b_{n})$.
We will assume the first case, the other case is similar.
We note for $c < \alpha - \alpha^2$ that $-u(c) < -v(c) < v(c) < u(c)$.
Hence by the intermediate value theorem there exists $a_n < a_{n+1} < b_{n+1} < b_n$ such that
$f_{a_n}^{n+1}(\alpha) = u(a_n)$,
$f_{a_{n+1}}^{n+1}(\alpha) = -u(a_{n+1})$,
$f_{b_{n+1}}^{n+1}(\alpha) = -u(b_{n+1})$ and 
$f_{b_n}^{n+1}(\alpha) = u(b_n)$.

Let $m \leq n$.
Consider an interval in $C_n$, say $I$.
We see from the above results that $-u(c) \leq f_c^{m}(\alpha), f_c^{n}(\alpha) < u(c)$. 
Further, we see that $f_c^{n}(\alpha)$ take the full possible range.
Hence, by the intermediate value theorem, we have
    $f_c^{n}(\alpha) - f_c^{m}(\alpha)$ contains a root in $I$.

We note that $f_c^{n}(\alpha) - f_c^{m}(\alpha)$ is degree $2^{n-1}$ in $c$.
Further, $C_n$ has $2^{n-1}$ intervals, each of which contains a root
    of $f_c^{n}(\alpha) - f_c^{m}(\alpha)$.
This proves that all roots of $f_c^{n}(\alpha) - f_c^{m}(\alpha)$ are real and contained in $C_n$.
By the previous comments, this implies $\mathrm{Prep}(\alpha)$ 
    is in $C$, and hence $\partial \mathcal{M}(\alpha) \subset C \subset \mathbb{R}$.
As the boundary of $\mathcal{M}(\alpha)$ is real, we see that
    $\mathcal{M}(\alpha)$ has no interior, and is equal to 
    it's boundary, which proves the result.
\end{proof}

\begin{proof}[Proof of Theorem \ref{thm:M no isolated points}]
Assume, for the sake of contradiction, that $\mathcal{M}(\alpha)$ has an isolated point, say $c_0$. 
Thus there exists $\epsilon>0$ such that $D(c_0,\epsilon)\cap\mathcal{M}(\alpha)=\{c_0\}.$ 
Here $D(c_0,\epsilon)=\{c\in\mathbb{C} : |c-c_0|<\epsilon\}$. Let $C = \partial(D(c_0, \epsilon)) =  \{c\in\mathbb{C} : |c-c_0| = \epsilon\}$ be the boundary of $D(c_0, \epsilon)$. 
Then for each $c\in C$ we see that $f_c^n(\alpha)\rightarrow \infty$ as $n\rightarrow\infty$. 
It was shown in Theorem \ref{thm:M is bounded} that $\mathcal{M}(\alpha)$ is bounded and contained in $\mathrm{clos}(D(0,R_{\alpha}))$. 
Thus if $f^n_c(\alpha)>R_{\alpha}$, then $f^n_c(\alpha)\rightarrow\infty$ as $n\rightarrow\infty$. 
For each $n\in\mathbb{N}$, we define $C_n:=\{c\in C : |f^n_{c_0}(c)|>R_{\alpha}\}$. 
It is easy to check that each $C_n$ is a relative open set with respect to $C$. 
Thus $\bigcup_{n\geq 1}C_n=C$. As $C$ is compact, it follows that $C$ has a finite sub-cover. 
There exists $N\in \mathbb{N}$ such that $C_N=C$. 

Let $\phi(c) = f^{N}_c(\alpha)$.
This function is analytic (and in fact a polynomial).  
Hence by the complex open mapping theorem, it maps open sets to open sets.
In particular, $\phi(D(c_0, \epsilon))$ is open.
Consider the boundary of this set, namely $\partial(\phi(D(c_0, \epsilon))$.
We first note that $\partial(\phi(D(c_0, \epsilon)) \subseteq \phi(C)$.
To see this, assume to the contrary that there exists a $z \in \partial(D(c_0, \epsilon)) \setminus \phi(C)$.
Then there would exist an $\epsilon_2 > 0$ such that $D(z, \epsilon_2) \cap \phi(C) = \emptyset$.
We see that $\phi^{-1}(D(z, \epsilon_2)) \cap D(c_0, \epsilon)$ is open, 
    and yet $D(z,\epsilon_2) \cap \phi(C)$ is not open.  
A contradiction, hence $\partial(\phi(D(c_0, \epsilon)) \subseteq \phi(C)$.

We have $\phi(c_0) \in \mathcal{M}(\alpha)$ and hence $|\phi(c_0)| \leq R_\alpha$.
We further see that $\partial(\phi(D(c_0, \epsilon)) \subseteq \phi(C)$ and 
    for all $c \in C$ that $|\phi(c)| > R_\alpha$.
This gives us that $\mathcal{M}(\alpha) \subseteq D(0, R_\alpha) \subseteq \phi(D(c_0, \epsilon)$.
Taking pre-images, we see that $D(c_0,\epsilon) \cap \mathcal{M}(\alpha)$ contains an uncountable number of points, hence is not a singleton.
\end{proof}

\section{Totally real algebraic numbers in $\mathcal{M}(\alpha)$}
\label{sec:Main}

In this section we prove 
Theorems \ref{thm:alpha} and \ref{thm:alpha = 1}. Let $\alpha\in\mathbb{C}$. Recall, we denote by
$$\mathrm{Prep}(\alpha)=\{c \in\mathbb{C} : f_c^n(\alpha)=f^m_c(\alpha),\,\,\text{for}\,\,m,n\in\mathbb{Z}_{\geq0}\}$$ the collection of parameters $c$ for which $\alpha$ is $f_c$-preperiodic. It is clear from the definition that $\mathrm{Prep}(\alpha)\subset \mathcal{M}(\alpha)$. It was shown by Baker-DeMarco \cite[Lemma 3.5]{BD11} that $\mathrm{Prep}(\alpha)$ is an infinite set. Also, $\mathrm{Prep}(0)$ is the set of complex parameters $c$ such that $f_c$ is post-critically finite (PCF) because the only finite critical point $0$ of $f_c$ has finite forward orbit.

The following lemma can be viewed as a local-to-global principle in the parameter space setting. A dynamical version was proved in \cite[Corollary 6.3]{CS97}. It is a useful auxiliary result to understand how to construct an adelic set in the proof of Theorem \ref{thm:alpha}.

\begin{lemma} \label{lem:prepinmandel} Suppose that $\alpha,c\in \overline{K}$. Then $\alpha$ is $f_c$-preperiodic if and only if $c$ belongs to the $v$-adic Mandelbrot set $\mathcal{M}_v(\alpha)$ at all places $v\in M_K$.
\end{lemma}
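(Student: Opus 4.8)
The plan is to prove the biconditional by an elementary application of the product formula together with the bounded-height observation already established in the introduction. Write $\alpha, c \in \overline{K}$ and enlarge $K$ if necessary so that both $\alpha$ and $c$ lie in $K$ itself (this only changes the set of places in a controlled way and does not affect the statement). The key quantity to track is, for each place $v \in M_K$, the orbit $\{|f_c^n(\alpha)|_v\}_{n \geq 0}$, and whether it is bounded.

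First I would prove the easy direction: if $\alpha$ is $f_c$-preperiodic, then the forward orbit $\{\alpha, f_c(\alpha), f_c^2(\alpha), \dots\}$ is a \emph{finite} set of elements of $K$, hence for every place $v$ the sequence $|f_c^n(\alpha)|_v$ takes only finitely many values and is therefore bounded; thus $c \in \mathcal{M}_v(\alpha)$ for all $v$, including the non-Archimedean ones where boundedness by $1$ is automatic when $|\alpha|_v \leq 1$ and $|c|_v \leq 1$.

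For the converse — the substantive direction — suppose $c \in \mathcal{M}_v(\alpha)$ for every $v \in M_K$. The goal is to show the forward orbit of $\alpha$ under $f_c$ is finite. The standard route: at each place the orbit is $v$-adically bounded, say $|f_c^n(\alpha)|_v \leq B_v$ for all $n$, with $B_v = 1$ for all but finitely many $v$ (using $\mathcal{M}_v(\alpha) = \mathcal{D}(0,1)$ when $|\alpha|_v, |c|_v \leq 1$). Then each point $\beta = f_c^n(\alpha)$ of the orbit lies in $K$ and satisfies $|\beta|_v \leq B_v$ at every place simultaneously; by the product formula (or equivalently, this says the Weil height $h(f_c^n(\alpha))$ is bounded uniformly in $n$ by $\sum_v \frac{N_v}{[K:\mathbb{Q}]}\log^+ B_v$), the entire orbit is a set of elements of $K$ of bounded height. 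By Northcott's theorem this set is finite, so $f_c^n(\alpha) = f_c^m(\alpha)$ for some $n > m$, i.e. $\alpha$ is $f_c$-preperiodic.

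The main obstacle — really the only point requiring care — is making the uniform bound $B_v$ genuinely independent of $n$ at the Archimedean places and verifying that only finitely many places contribute: at an Archimedean $v$ one uses part (1) of Theorem~\ref{thm:M is bounded} to get $|c|_v \leq R_{|\alpha|_v}$ and then runs the same reverse-triangle-inequality escape argument from the proof of that theorem to bound the whole orbit, while at the finitely many bad non-Archimedean places one invokes $c \in \mathcal{M}_v(\alpha)$ directly and at the remaining (almost all) places the strong triangle inequality gives $B_v = 1$ for free. Once the bound is in hand the height argument and Northcott finish it immediately.
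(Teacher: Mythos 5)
Your proof is correct, and it takes a route that is formally different from the paper's but mathematically equivalent in substance. The paper proves this lemma in two lines by invoking the Call--Goldstine dynamical local-to-global principle \cite[Corollary 6.3]{CS97}: $\alpha$ is preperiodic iff $\alpha$ lies in the $v$-adic filled Julia set $\mathcal{K}_v(f_c)$ for all $v$, and the last condition is the same thing as $c\in\mathcal{M}_v(\alpha)$ for all $v$ by unwinding definitions. What you have done is unpack that citation and reprove the needed special case from scratch via bounded Weil height plus Northcott --- which is in fact the standard proof of the Call--Goldstine result. So your argument is self-contained where the paper's is a black-box citation; in exchange it is longer. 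Both are perfectly fine.

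One small inefficiency in your write-up: at the Archimedean places you propose to rerun the reverse-triangle-inequality escape argument from Theorem \ref{thm:M is bounded} to produce the uniform orbit bound $B_v$. This is unnecessary. The hypothesis $c\in\mathcal{M}_v(\alpha)$ already says $\sup_n|f_c^n(\alpha)|_v<\infty$ \emph{by definition}, so a finite $B_v$ exists for free at every place, Archimedean or not; the only thing that needs an argument is that $B_v=1$ at all but finitely many non-Archimedean $v$, which you correctly get from $|\alpha|_v\le 1$ and $|c|_v\le 1$ there via the strong triangle inequality. Also, the phrase ``by the product formula'' is a misnomer: what you actually use is the definition of the Weil height as a weighted sum of $\log^+$ over places together with Northcott; the product formula itself plays no role. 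Finally, your remark that enlarging $K$ to contain $\alpha$ and $c$ ``does not affect the statement'' deserves a sentence of justification (boundedness of the orbit at a place of $K$ with a fixed extension to $\mathbb{C}_v$ is equivalent to boundedness at the corresponding place of $K(\alpha,c)$), but this is the same implicit reduction the paper makes when it applies Call--Goldstine, so it is not a gap you introduced.
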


\begin{rem}
    In the case $\alpha=0$, it is equivalent to say that $f_c$ is post-critically finite if and only if $0$ is contained in the $v$-adic filled Julia set $\mathcal{K}_v(f_c)$  at all places $v\in M_K$ (i.e., $0$ stays bounded $v$-adically).
\end{rem}

\begin{proof}[Proof of Lemma \ref{lem:prepinmandel}] Using dynamical local-to-global principle by Call-Goldstine \cite[Corollary 6.3]{CS97}, we have
\begin{align*} \alpha\,\,\text{is preperiodic for}\,\,f_c &\Longleftrightarrow \alpha \in \mathcal{K}_v(f_c)\,\,\text{for all places}\,\,v\in M_K\\ 
&(\text{meaning that $\alpha$ stays bounded $v$-adically})\\&\Longleftrightarrow c\in\mathcal{M}_v(\alpha)\,\,\text{for all places}\,\,v\in M_K
\end{align*}
The last equivalent statement follows from the definition of the $v$-adic Mandelbrot set.
\end{proof}

It was recently proved by Noytaptim and Petsche \cite[Theorem 1]{NP23} that there are only finitely many totally real parameters $c$ for which $f_c$ is  PCF polynomials. More precisely, they showed that $\mathrm{Prep}(0)\cap\mathbb{Q}^{\mathrm{tr}}=\{-2,-1,0\}$. That is, $x^2, x^2-1$, and $x^2-2$ are the only   post-critically finite (quadratic) unicritical polynomials defined over the maximal totally real extension $\mathbb{Q}^{\mathrm{tr}}$.

 The following result provides a classification of  quadratic polynomials $f_c(x)$ defined over $\mathbb{Q}^{\mathrm{tr}}$ for which a rational number $\alpha$ is $f_c$-preperiodic. It turns out that there are only finitely many such polynomials $f_c(x)\in\mathbb{Q}^{\mathrm{tr}}[x]$ when $\alpha\in\mathbb{Q}\cap(-2,2)$. We have proved that the set $\mathcal{M}(\alpha)$ is contained in real line when a real number $\alpha$ satisfies $|\alpha|\geq 2$ (cf. Theorem \ref{thm:M is real}). Thus there are infinitely many quadratic polynomials defined over $\mathbb{Q}^{\mathrm{tr}}$ in which a rational number $\alpha$ with $|\alpha|\geq 2$ is $f_c$-preperiodic. 
 
\begin{proof}[Proof of Theorem \ref{thm:alpha}] 
First consider the case when $|\alpha| < 2$
Suppose that $\alpha$ is  a rational number such that $-2<\alpha<2$. Consider the compact adelic set $\mathbb{E}=\prod_{v\in M_{\mathbb{Q}}} E_v$ where 
$$E_v=\begin{cases} \mathcal{M}(\alpha)\cap \mathbb{R},&\mbox{if}\,\,v=\infty\\ \mathcal{M}_{v}(\alpha),&\mbox{if}\,\, v\neq\infty\end{cases}$$
The global capacity of $\mathbb{E}$ is 
$$\gamma_{\infty}(\mathbb{E})=\gamma_{\infty}(\mathcal{M}(\alpha)\cap\mathbb{R})\prod_{v\neq \infty}\gamma_{\infty}(\mathcal{M}_v(\alpha))<1$$ where the last inequality follows from the fact that $\gamma_{\infty}(\mathcal{M}_v(\alpha))=1$ at all places $v\neq \infty$ by Proposition \ref{prop: capmandel} and $\gamma_{\infty}(\mathcal{M}(\alpha)\cap\mathbb{R})<\gamma_{\infty}(\mathcal{M}(\alpha))=1$ by monotonicity of the logarithmic capacity \cite[Theorem 5.1.2(a)]{Ra95}. Hence by Fekete-Szeg\"{o} theorem \ref{thm:FeketeSzego}, we have $E_v$ contains finitely many complete conjugate sets algebraic numbers for each $v\in M_{\mathbb{Q}}$. Hence the finiteness follows.

It remains to check that the set  $\mathrm{Prep}(\alpha)\cap\mathbb{Q}^{\mathrm{tr}}$ is always nonempty when it is finite for $|\alpha|<2.$ It is straightforward to verify that $\theta_{\alpha}=-\alpha^2-1-\sqrt{\alpha^2+1}\in\overline{\mathbb{Q}}$ is contained in $\mathrm{Prep}(\alpha)\cap\mathbb{Q}^{\mathrm{tr}}$ for every rational number $\alpha \in (-2,2)$. In fact, $\theta_{\alpha}$ is a totally real algebraic number because it satisfies the minimal polynomial $G_{\theta_{\alpha}}(X)=X^2+2(\alpha^2+1)X+\alpha^4+\alpha^2=0$ and its Galois conjugate (if any\footnote{The reason we say ``if any" because it could be possible that some rational numbers $\alpha\in (-2,2)$ the algebraic number $\theta_{\alpha}$ is simply a rational integer. For example, $\theta_0=-2.$}) $\theta'_{\alpha}=-\alpha^2-1+\sqrt{\alpha^2+1}$ is also algebraic number contained in the same closed real interval $\mathcal{M}(\alpha)\cap\mathbb{R}.$ To see that $\theta_{\alpha}$ is in $\mathrm{Prep}(\alpha)\cap\mathbb{Q}^{\mathrm{tr}}$, it suffices to verify that the forward orbit of $\alpha$ under $f_{\theta_{\alpha}}$-iteration is finite. Indeed, $f^2_{\theta_{\alpha}}(\alpha)=f^3_{\theta_{\alpha}}(\alpha).$ Hence $\#\mathrm{Prep}(\alpha)\cap\mathbb{Q}^{\mathrm{tr}}\geq 1$ as desired.
This proves part \eqref{case:alpha<2}.

Next suppose that $\alpha$ is a rational number such that $\alpha\in (-\infty, -2]\cup [2,\infty).$  

Without invoking a powerful Fekete-Szeg\"{o} theorem, we can check directly from Theorem \ref{thm:M is real} that $\mathrm{Prep}(\alpha)\cap\mathbb{Q}^{\mathrm{tr}}$ is an infinite set. Indeed, if $c\in\mathrm{Prep}(\alpha)$, then so are all of its conjugates and $$\mathrm{Prep}(\alpha)\subseteq \mathcal{M}(\alpha)\subseteq\mathbb{R}.$$ As $\mathrm{Prep}(\alpha)$ is an infinite set, and so it follows that $\mathrm{Prep}(\alpha)\cap\mathbb{Q}^{\mathrm{tr}}$ must be also be infinite.  
This proves part \eqref{case:alpha>=2}.
\end{proof}

\begin{rem} \label{rem:equipfoffiniteness}
 Alternatively, one can also appeal to the arithmetic equidistribution of small 
height to deduce the finiteness of the intersection $\#(\mathrm{Prep}(\alpha)\cap\mathbb{Q}^{\mathrm{tr}})$. As the proof contains different ingredients from the previous approach, we include details here for interested reader. Let us recall some basic set-up of height function associated to the adelic set. Let $K$ be a number field. Define a compact adelic set by  $\mathbb{M}_{\alpha}=\prod_{v\in M_K}\mathcal{M}_{v}(\alpha)$ with $\mathcal{M}_v(\alpha)=\mathcal{D}(0,1)$ for all but finitely many $v\in M_K$. Thus the global capacity of $\mathbb{M}_{\alpha}$ is $\gamma_{\infty}(\mathbb{M}_{\alpha})=1$. Baker and DeMarco \cite[\S3.3]{BD11} introduced the height function associated to $\mathbb{M}_{\alpha}$ and defined $h_{\mathbb{M}_{\alpha}}: \overline{K}\rightarrow[0,+\infty)$ by $$h_{\mathbb{M}_{\alpha}}(x)=\frac{1}{|S|}\sum_{v\in M_{\mathbb{Q}}}N_v\sum_{x\in S}G_{\mathcal{M}_{v}(\alpha)}(x)$$ where $S$ denotes a finite set of Galois conjugates $\mathrm{Gal}(\overline{K}/K)\cdot x$ of $x\in \overline{K}$ and $G_{\mathcal{M}_v(\alpha)} : \mathbb{C}_v\rightarrow [0,+\infty)$ is a continuous local Green's function  which detects points in $\mathcal{M}_v(\alpha)$ in the sense that $G_{\mathcal{M}_v(\alpha)}(x)=0$ if and only if $x\in \mathcal{M}_v(\alpha).$

Suppose, for the sake of contradiction, that there exists an infinite distinct sequence  $\{c_n\}\subset \overline{\mathbb{Q}}\cap\mathbb{Q}^{\mathrm{tr}}$ such that $\alpha$ is $f_{c_n}$-preperiodic. This means that the sequence $\{c_n\}$ belongs to $\mathcal{M}_v(\alpha)$ for all $v\in M_{\mathbb{Q}}$. Thus $h_{\mathbb{M}_{\alpha}}(c_n)\rightarrow0$  as $n\rightarrow\infty$ (i.e., $\{c_n\}$ is small points). Together with the fact that the global capacity $\gamma_{\infty}(\mathbb{M}_{\alpha})=1$, we then apply the arithmetic equidistribution theorem (cf. \cite[Corollary 7.53]{BR10}) to obtain that the sequence $\{c_n\}$ is equidistributed with respect to the equilibrium measure $\mathbb{\mu}_{\mathcal{M}_v(\alpha)}$ on $\mathbb{P}^1_{\mathrm{Berk},\mathbb{C}_v}$ (i.e., the Berkovich projective line, see \cite{Be19} and \cite{BR10} for exposition) at all places $v\in M_{\mathbb{Q}}$. At $v=\infty$, the measure  $\mu_{\mathcal{M}(\alpha)}$ is supported on a subset contained in real line. On the other hand, the support of $\mu_{\mathcal{M}(\alpha)}$ on $\mathbb{P}^1(\mathbb{C})$  is the boundary of the Mandelbrot set $\partial\mathcal{M}(\alpha)$ \cite[Proposition 3.3(5)]{BD11} which is not contained in real line as $\alpha\in \mathbb{Q}\cap(-2,2)$ and it gives rise to a contradiction. Hence the intersection $\mathrm{Prep}(\alpha)\cap\mathbb{Q}^{\mathrm{tr}}$ is finite as desired.
\end{rem}

For special\footnote{The rational numbers $\alpha=\pm1$ are more ``special" than other values in $(-2,2)$ in the following two senses : first, each totally real parameter $c$ in $\mathrm{Prep}(\alpha)\cap\mathbb{Q}^{\mathrm{tr}}$ is algebraic integer and; second, the real part of $\mathcal{M}(\alpha)\cap\mathbb{R}$ has logarithmic capacity less than $1$.} values of rational numbers $\alpha=\pm1$, we are able to explicitly compute the intersection $\mathrm{Prep}(\alpha)\cap\mathbb{Q}^{\mathrm{tr}}$.

\begin{rem} Before proceeding the proof, we make a brief discuss about the statement of the result.  The numerical criterion (Theorem \ref{thm : numerical tool}) allows us to bound the degree of \textbf{algebraic integers} in shorts intervals. As for other rational numbers $\alpha \in (-2,2)\cap\mathbb{Q}$ that differ from $\pm1$, we could not apply the numerical criterion to bound the degree of $c\in\mathrm{Prep}(\alpha)\cap\mathbb{Q}^{\mathrm{tr}}$ because $c$ is no longer algebraic integer. 
\end{rem}

\begin{proof}[Proof of Theorem \ref{thm:alpha = 1}]
We wish to obtain explicit computation of the intersection $\mathrm{Prep}(1)\cap\mathbb{Q}^{\mathrm{tr}}$ and this immediately yields  the same result of $\mathrm{Prep}(-1)\cap\mathbb{Q}^{\mathrm{tr}}$ as $f_c$ is even (meaning that $f_c(x)=f_c(-x)$). We are seeking totally real algebraic parameters $c$ for which $1$ is $f_c$-preperiodic. That is, parameters $c\in\mathbb{Q}^{\mathrm{tr}}$ satisfy the polynomial relation $F(X)=f^i_X(1)-f^j_X(1)$ for some integers $0\leq i<j$. It is clear that $F(X)$ is a monic polynomial with integer coefficients and so $c$ must be algebraic integer. In order to search for totally real algebraic integers in the real part $\mathcal{M}(1)\cap\mathbb{R}$, it suffices to search for algebraic integers in the closed interval $[-2-\sqrt{2},0].$ Consider the sequences, for any $n\geq 2$,
$$a_n=d_n([-2-\sqrt{2},0])^{n(n-1)}=(2+\sqrt{2})^{n(n-1)}D_n\quad\text{and}\quad b_n=\frac{n^{2n}}{n!^2}$$ where   $ [-2-\sqrt{2},0]$  is the smallest interval (of length less that $4$)  that contained the real part $\mathcal{M}(1)\cap \mathbb{R}$ (cf. Theorem \ref{thm:M is bounded}) and the sequence $\{D_n\}_{n\geq 2}$ given in Theorem \ref{thm : numerical tool}. Applying Theorem \ref{thm : numerical tool} with the sequences $\{a_n\}$ and $\{b_n\}$, we found $n_0=12$ such that $$a_{12}<b_{12}\quad\text{and}\quad a_{13}/a_{12}<b_{13}/b_{12}.$$ This means that the interval  $ [-2-\sqrt{2},0]$ contains finite complete conjugate sets of algebraic integers of degree at most $11$. 
Consider a polynomial $p(c)$ with all roots in $[-2 - \sqrt{2}, 0]$.
We see that $z^{\deg(p)} p(z+1/z+2)$ is a monic integer polynomial with all roots contained on or inside the unit disc.
By Kronecker's Theorem \cite{Kro57} this implies that $z^{\deg(p)} p(z+1/z+2)$ is a cyclotomic polynomial of degree at most 22.
There are only a finite number of such polynomials, which are easily checked.
We verify that 
$c, c+1, c+2, c+3,$ and $c^2+4c+2$ are monic polynomials with integers coefficients whose roots are contained in $[-2-\sqrt{2},0]$. Then  $\mathrm{Prep}(\alpha)\cap\mathbb{Q}^{\mathrm{tr}}\subseteq\{-2-\sqrt{2},-3,-2,-1,-2+\sqrt{2},0\}:=A$ and it is straightforward to check that each element $c\in\overline{\mathbb{Z}}$ in the set $A$ satisfy the property that $1$ is $f_c$-preperiodic. Hence we have $$\mathrm{Prep}(1)\cap\mathbb{Q}^{\mathrm{tr}}=\{-2-\sqrt{2},-3,-2,-1,-2+\sqrt{2},0\}.$$
\end{proof}

\section{Open Questions}
\label{sec:conc}

 We close our article by posing some questions for further investigation.
\begin{enumerate}
    \item It would be interesting to classify for which $\alpha\in\overline{\mathbb{Q}}\setminus \mathbb{Q}$ such that the set $\{c\in\mathbb{Q}^{\mathrm{tr}} : f^n_c(\alpha)=f^m_c(\alpha),\,\,0\leq m<n\}$ is finite.
    It is not clear what the correct generalization should be when $\alpha \not\in \mathbb{Q}$.
    For example, if $\alpha = \sqrt{2}-1 \in \mathbb{Q}^{\mathrm{tr}}$, then it is easy
    to check that $c= -2+\sqrt{2} \in \mathrm{Prep}(\alpha)$ as $\{f_c^n(\alpha)\}_{n=0}^\infty = 
    \{-2+\sqrt{2}, 1+\sqrt{2}, 1+\sqrt{2}, 1+\sqrt{2}, \dots\}$.
    One can also quickly check that the Galois conjugate of $c$ over $\mathbb{Q}$ is not in 
    $\mathrm{Prep}(\alpha)$.
    In general if $\alpha$ is a preperiodic point for $c$, then $\alpha$ is a preperiodic point
    for the Galois conjugates of $c$ {\bf over $\mathbb{Q}(\alpha)$}.
    There will be cases where $c$ is totally real when respect to the field $\mathbb{Q}(\alpha)$, whereas it is not totally real with respect to $\mathbb{Q}$.
    \item In the same vein to Theorem \ref{thm:alpha}, it would be interesting to extend the result to the unicritical family $f_c(x)=x^d+c$ when degree $d\geq 3$.
    \item The arithmetic equidistribution theorem used to derive the finiteness of $\mathrm{Prep}(\alpha)\cap\mathbb{Q}^{\mathrm{tr}}$ (cf. Remark \ref{rem:equipfoffiniteness}) does not provide an effective result as we have no control over the number of parameters $c\in\mathbb{Q}^{\mathrm{tr}}$ such that $\alpha\in \mathbb{Q}$ is $f_c$-preperiodic. It is reasonable to give an effective computable bounds or uniform bounds for the size of $ \mathrm{Prep}(\alpha)\cap\mathbb{Q}^{\mathrm{tr}}$ when $\alpha\in (-2,2)\cap\mathbb{Q}$ in light of DeMarco-Krieger-Ye \cite{DKY22} and Fili \cite{Fi17} building upon the quantitative equidistribution theorem of Favre-Rivera-Letelier \cite{FRL06}. Also, it would  be interesting to develop a different approach to compute such intersection for more values of rational numbers in $(-2,2)$.
\end{enumerate}

\end{document}